\numberwithin{equation}{section}
\setlist[enumerate,1]{label=(\roman*)}  
\newcommand{\evar}{{\sf EV@R}}
\newcommand{\cA}{{\mathfrak A}}
\newcommand{\cX}{{\mathfrak X}}
\newcommand{\cP}{{\mathfrak P}}
\newcommand{\cM}{{\mathfrak M}}
\newcommand{\cY}{{\mathfrak Y}}
\newcommand{\cG}{{\mathfrak G}}
\newcommand{\cF}{{\mathfrak F}}
\newcommand{\cR}{{\mathfrak R}}
\newcommand{\cT}{{\mathfrak T}}
\newcommand{\cv}{{\mathfrak v}}
\theoremstyle{plain}
\newtheorem{theorem}             {Theorem}[section]
\newtheorem{proposition}[theorem]{Proposition}
\theoremstyle{definition}
\newtheorem{definition} [theorem]{Definition}
\theoremstyle{remark}
\newtheorem{example}[theorem]{Example}
\newtheorem{remark}[theorem]{Remark}
\newtheorem{assum}{Assumption}[section]
\newcommand{\x}{{ x}}
\newcommand{\N}{{\cal N}}
\newcommand{\V}{{\cal V}}
\newcommand{\Z}{{\cal Z}}
\newcommand{\F}{{\cal F}}
\newcommand{\X}{{\cal X}}
\newcommand{\R}{\mathcal{R}}
\newcommand{\be}{\begin{equation}}
\newcommand{\ee}{\end{equation}}
\newcommand{\rr}{\rightrightarrows}
\newcommand{\bx}{\mbox{\boldmath$x$}}
\newcommand{\bxx}{\mbox{\scriptsize\boldmath$\x$}}
\newcommand{\bsx}{\mbox{\boldmath\scriptsize$x$}}
\newcommand{\lan}{\langle}
\newcommand{\ran}{\rangle}
\def\w{\omega}
\def\e{\varepsilon}
\def\O{\Omega}
\DeclareMathOperator*{\argmin}{arg\,min}
\DeclareMathOperator*{\argmax}{arg\,max}
\DeclareMathOperator*{\essinf}{ess\,inf}
\DeclareMathOperator*{\ess}{ess\,sup}
\DeclareMathOperator*{\esssup}{ess\,sup}
\DeclareMathOperator*{\minmax}{min/\,max}
\DeclareMathOperator{\one}{{\mathds 1}} 	
\DeclareMathOperator*{\Min}{Min}
\def\se {{\,\succeq\,}}
\newcommand{\avr} {\AVaR}
\DeclareMathOperator{\AVaR}{\mathsf{AV@R}}	
\DeclareMathOperator{\EVaR}{\mathsf {EV@R}}
\def\bbr{{\Bbb{R}}} 
\def\bbe{{\Bbb{E}}}
\let\oldproofname=\proofname
\renewcommand{\proofname}{\rm\bf{\oldproofname}}
\begin{document}
\title{\textbf{Risk averse stochastic programming: time consistency and optimal stopping}}

\author{
{\bf Alois Pichler}
\\Fakult{\"a}t für Mathematik\\Technische Universit{\"a}t Chemnitz\\
D--09111 Chemnitz, Germany
\and
{\bf  Rui Peng Liu} and  {\bf Alexander  Shapiro}\thanks{
Research of this author  was partly supported by NSF grant 1633196, and  DARPA EQUiPS program  grant SNL 014150709.}\\
School of Industrial and Systems Engineering\\
Georgia Institute of Technology\\
Atlanta, GA 30332-0205\\
}
\maketitle
\thispagestyle{empty}

\begin{abstract}
	This paper addresses time consistency of risk averse optimal stopping in stochastic optimization. It is demonstrated that time consistent optimal stopping entails a specific structure of the functionals describing the transition between consecutive stages.
	The stopping risk measures capture this structural behavior and they allow natural dynamic equations for risk averse decision making over time.
	Consequently, associated optimal policies satisfy Bellman's principle of optimality, which characterizes optimal policies for optimization by stating that a decision maker should not reconsider
previous decisions retrospectively. We also discuss numerical approaches to solving such problems.
\end{abstract}

\noindent
{\bf Keywords:} Stochastic programming, coherent risk measures, time consistency, dynamic equations, optimal stopping time, Snell envelope.

\section{Introduction}\label{sec:intr}
Optimal Stopping (OS)   is a classical topic of research  in statistics and operations research going back to the pioneering work of \citet{Wald1947, Wald1949} on sequential analysis.   For a thorough discussion of theoretical foundations of OS we can refer to  \cite{ShiryaevStopping}, for example. The classical formulation of OS assumes availability of the probability distribution of the considered data process. Of course in real life applications the `true distribution' is never known exactly and in itself can be viewed as uncertain. This motivates to formulate a considered   optimization problem in terms of a chosen family of probability distributions. In stochastic optimization this is going back to the pioneering paper of
\cite{Scarf}, and   is often referred to as the distributionally robust approach to stochastic programming. In a certain sense  the distributional robustness can be considered as a dual counterpart
of  the modern approach to   risk measures, cf.\ \cite{artzner1999}.
Recently distributionally robust (risk averse)
formulations of OS were considered in several publications, e.g., \citet[Section~6.5]{Follmer2004}, \cite{cher2006},  \cite{Karatzas2010}, \cite{Kraetschmer}, \cite{Kraetschmer2016}, \cite{Belom2017}, \cite{Goldenshluger}.

However, a straightforward extension of risk neutral stochastic programs to their distributionally robust (risk averse)  counterparts involves a delicate issue of time consistency.
 That is, decisions (optimal policies)  designed at the initial time,    before observing  any realization of the random data process,  should not be reconsidered  from a   perspective of later stages.
This is closely  related to Bellman’s principle of optimality and to eventual  writing of  the corresponding  dynamic programming equations which is crucial for an efficient numerical solution  of such problems. The question  of time (in)consistency  has been realized very early in economics and management
science.
\cite{Weller1978} (cf.\ also \cite{Hammond1989}) discusses time consistency in combination with expected utilities. Quoting \citet[p.~263]{Weller1978}:
		``I~prove that under certain assumptions, consistency is equivalent to maximizing expected utility on the set of feasible plans, with a restricted set of utility functions and a tree of subjective probability distributions which satisfy the Bayesian updating rule.''
This problem setting puts risk measures in relation to stochastic optimization. In this context, however, it is to mention that 	\cite{Haviv1996} gives a Markov decision process with constraints, so that the optimal solution does \emph{not} satisfy Bellman's principle.

More recently
the discussion of consistency properties of risk measures has become popular in financial mathematics. Pioneers include \cite{Wang1999} and  \cite{Jobert}, who introduce a concept of dynamic consistency of risk measures themselves. Other authors (e.g., \cite{Weber2006}, \cite{Cheridito2011}, \cite{Ruszczynski2010}) take a similar approach by discussing axioms of risk measures. These publications try to identify properties of risk measures themselves, which are relevant in a general, time consistent framework. Time consistency also respects increasing information, which can be investigated as essential feature of conditional risk measures  (cf.\ \cite{Kovacevic2009}).
\cite{Schachermayer2009a} show  that time consistent and law invariant risk measures have a very specific, entropy like representation which depends on not more than a single parameter.

Optimal stopping together with (risk neutral) policy optimization was considered in  \cite{hordijk}, but was not developed further. It turns out that optimal stopping problems lead to dynamic equations in the classical (risk neutral)  setting.
In this paper we extend this in several directions.
We combine stopping times and risk measures by introducing \emph{stopping risk measures} and demonstrate how optimal stopping times combine with optimizing over policies (decision rules). The appropriate nested risk measures corresponding to the problem structure entail transition functionals at each stage. Their specific structure is employed to specify dynamic equations and generalized risk martingales. We associated dynamic equations corresponding to optimal stopping and investigate their specific structure. Optimal policies derived from optimal stopping problems are naturally time consistent.

We also develop a concept of time consistency in the distributionally robust (risk averse) setting  which is naturally  amenable to Bellman’s principle of optimality and to formulation  of the dynamic programming equations.
These equations are similar to dynamic equations in \citet[Theorem 6.52]{Follmer2004} but do not require the restrictive assumption of stability used there (see Remark~\ref{rem-stab} below).
Finally we discuss computational approaches to solving such problems and give numerical examples.

\paragraph{Outline.} The paper is organized as follows. In Section \ref{sec-optst} we  formulate the classical  (risk neutral) OS problem in discrete time.
Time and dynamic consistency of a general class of multistage optimization problems is discussed in Section \ref{sec-timecon}. In Section \ref{sec:decomp} the respective dynamic programming equations, based on   decomposability of the nested formulation, are derived. Time consistent formulations of the  distributionally robust (risk averse) OS problems are presented in Section \ref{sec:Stopping}. Finally Section \ref{sec:numeric} is devoted to computational methods and numerical examples.

\section{Optimal stopping time}
\label{sec-optst}

We will use the following framework. Let $(\O,\F,P)$ be a probability space and $\mathfrak F:=(\F_0,\dots,\F_T)$ be a
 filtration (a sequence of increasing sigma algebras, $\F_0\subset\cdots\subset\F_{T}$)
with\footnote{The sigma algebra consisting of only two sets, the empty set and the whole space $\O$, is called trivial.}
 $\F_0=\{\emptyset,\,\O\}$ and $\F_T=\F$.
 Let $\Z_0\subset \cdots\subset \Z_T$ be a sequence of
linear spaces of  functions   $Z\colon\O\to\bbr$.
We assume that  $\Z_t:=L_p(\O,\F_t,P)$  for some $p\in [1,\infty]$, although more general settings are possible.
We   denote by  $Z_t$ an element of the space $\Z_t$. Note   that an element  $Z_t\in \Z_t$ actually is a class of $\F_t$-measurable functions which can be different from each other on  a set of $P$-measure zero.
Since $\F_0$ is trivial, the space $\Z_0$ consists of constant functions and will be identified with $\bbr$.
Since elements of  $\Z_t$ are  $\F_t$-measurable,  a  process $Z_t\in \Z_t$, $t=0,\dots,T$,  is adapted  to the filtration $\cF$.  We use the notation
$	\Z_{t,u}:=\Z_t\times \cdots\times \Z_u,$   $0\le t<u\le T,$
in particular $\Z_{0,T}=\Z_0\times \cdots\times \Z_T$.
For elements  $Z_{t,u}=(Z_t,\dots,Z_u)$ and $Z'_{t,u}=(Z'_t,\dots,Z'_u)$ of  $\Z_{t,u}$ we write
$Z_{t,u}\preceq Z'_{t,u}$ to denote that $Z_\tau(\w)\le Z_\tau'(\w)$
for   almost every  (a.e.), or in other words almost surely (a.s.),   (with respect to the reference probability measure $P$) $\w\in \O$, and  write
$Z_{t,u}\prec Z'_{t,u}$ to denote that $Z_{t,u}\preceq  Z'_{t,u}$
and $Z_{t,u}\ne Z'_{t,u}$.
By $\bbe_{|\F}$ or $\bbe[\,\cdot\,|\F]$ we denote  the conditional expectation with respect to sigma field $\F$.
 By $\one_A$ we denote the indicator function of set $A$.

Recall that a \emph{stopping time}, adapted  to the filtration $\cF$, is a random variable
$\tau\colon\O\to \{0,\dots,T\}$ such that $\{\w\in\O\colon\tau(\w)= t\}\in \F_t$ for $t=0,\dots,T$.
We denote by $\cT$
the set of   stopping times (adapted  to the filtration $\cF$).
For a process $Z_t\in \Z_t$, $t=0,\dots,T$, the (risk neutral) optimal stopping time problem can be written as
\begin{equation}\label{stop-1}
\minmax_{\tau\in \cT}\bbe[Z_\tau].
\end{equation}
By `$\minmax$' we mean that either  the minimization or maximization procedure  is applied.
Of course, the minimization procedure can be changed to the corresponding maximization simply by changing $Z_\tau$ to $-Z_\tau$.  Later we will consider combinations of stoping times and policy optimizations where it will be essential whether a maximization or minimization procedure is considered.

We will extend the above stopping time problem  in two directions.
First,  we combine this with cost optimization.
Consider objective  functions
$f_0\colon\bbr^{n_0}\to \bbr$,
$f_t\colon\bbr^{n_t}\times \O\to \bbr$,
and feasibility constraints defined by $\X_0\subset \bbr^{n_0}$ and  multifunctions
$\X_t\colon\bbr^{n_{t-1}}\times \O \rr\bbr^{n_t}$,  $t=1,\dots,T$.
It is assumed that $f_t(x_{t-1},\cdot)$ and $\X_t(x_{t-1},\cdot)$
are $\F_t$-measurable.  A sequence $\pi=\{x_0,\bx_1(\cdot),\dots,\bx_T(\cdot)\}$ of mappings $\bx_t\colon\O\to \bbr^{n_t}$, $t=0,\dots,T$,  adapted to the filtration\footnote{We use bold notation $\bx_t$ for (measurable) mappings in order to distinguish it from deterministic vector $x_t\in \bbr^{n_t}$.
Also by   writing $\bx_t(\cdot)$ we emphasize that this is a function of $\w\in \O$, i.e., is a random variable, rather than a deterministic vector. It is said that the sequence $(x_0,\bx_1,\dots,\bx_T)$ is adapted to the filtration if $\bx_t(\cdot)$  is $\F_t$-measurable for every $t=1,\dots,T$.}  $\cF$ is called a \emph{policy} or a \emph{decision rule}. Since $\F_0$ is trivial, the first decision $x_0$ is deterministic.
A policy $\pi=\{x_0,\bx_1,\dots,\bx_{T}\}$ is feasible if it satisfies the feasibility constraints, i.e.,  $x_0\in \X_0$ and
$\bx_t(\w)\in  \X_t(\bx_{t-1}(\w),\w)$, $t=1,\dots,T$,  for a.e. $\w\in \O$. We denote by $\Pi$ the set of feasible policies such that  $\big(f_0(x_{0}), f_1(\bx_1(\cdot),\cdot), \dots , f_T(\bx_{T}(\cdot),\cdot)\big)\in \Z_{0,T}$.
We then consider the following   problems
\begin{equation}
\label{stop-2}
\minmax_{\pi\in \Pi}\,\minmax_{\tau\in \cT} \bbe[f_\tau(\bx_\tau,\w)].
\end{equation}
The min-min, i.e., minimization with respect to $\pi\in \Pi$  and $\tau\in \cT$,   problem~\eqref{stop-2} is a natural formulation  aimed at the simultaneous minimization   with respect to the   decision rules  and stopping time. Similarly the max-max formulation is  aimed at the simultaneous maximization of $\pi\in \Pi$  and $\tau\in \cT$.
The max-min (min-max) formulation~\eqref{stop-2} could be interpreted as a certain type of compromise, we will discuss this later.

Second, we consider risk averse counterparts  of risk neutral problems of the form~\eqref{stop-1}--\eqref{stop-2}. It is tempting to extend the risk neutral formulations  simply by   replacing the corresponding expectation operator by an appropriate risk measure (such approach was adopted in some recent publications, e.g.,  \citet[Section 6.5]{Follmer2004},  \cite{Belom2017}, \cite{Goldenshluger}).  However, this has a delicate issue with   time consistency considerations.
We will discuss time consistency concepts from a somewhat general point of view in the following Sections~\ref{sec-timecon} and~\ref{sec:decomp} and will come back to discussion of optimal stopping time in Section~\ref{sec:Stopping}.

Let us finish this section with the following observations.
We have that $\O$ is the union of the disjoint sets
\begin{equation}\label{setomega}
\O_t^\tau:=\{\w\colon\tau(\w)=t\}, \;t=0,\dots,T,
\end{equation}
  and hence
$\one_\O=\sum_{t=0}^T\one_{\{\tau=t\}}$.
Note that $\one_{\{\tau=s\}}$ are $\F_t$ measurable for $0\le s\le t\le T$  and hence $\one_{\{\tau\le t\}}$ and $\one_{\{\tau> t\}}=\one_\O-\one_{\{\tau\le t\}}$ are also $\F_t$ measurable for  $t=0,\dots,T$.
Moreover
$\one_{\{\tau=t\}} Z_\tau=\one_{\{\tau=t\}} Z_t$
and thus for $(Z_0,\dots,Z_T)\in \Z_{0,T}$  it follows that
\begin{equation}\label{stopp-1}
 \begin{array}{lll}
 Z_\tau = \sum_{t=0}^T\one_{\{\tau=t\}}Z_\tau =
 \sum_{t=0}^T\one_{\{\tau=t\}}Z_t.
  \end{array}
\end{equation}
By replacing $Z_t$ in~\eqref{stopp-1} with $Z'_t:=Z_0+\dots+Z_t$ we obtain
\begin{align} \nonumber
	Z_0+\dots+Z_\tau  &=   Z'_\tau = \sum_{t=0}^T\one_{\{\tau=t\}}Z'_t
	 =  \sum_{t=0}^T\sum_{i=0}^t\one_{\{\tau=t\}}Z_i  \\
\label{stopp-1a}
	&= \sum_{t=0}^T\sum_{i=t}^T\one_{\{\tau=i\}}Z_t
	 = \sum_{t=0}^T\one_{\{\tau\ge t\}}Z_t.
\end{align}
The problem $\max_{\tau\in \cT}\bbe\left[\sum_{i=1}^\tau Z_i\right]$
can be considered in the framework of problem~\eqref{stop-1} simply by replacing $Z_\tau$ with $Z'_\tau=Z_0+\dots+Z_\tau$, and similarly for problems~\eqref{stop-2}.
Finally
note that since  $\F_0=\{\emptyset,\,\O\}$ is trivial, we have that
$\{\w\colon\tau(\w)=0\}$ is either  $\O$ or $\emptyset$, and hence either
$\one_{\{\tau=0\}}Z_0=Z_0$ or $\one_{\{\tau=0\}}Z_0=0$, and probability of the event $\{\tau=0\}$ is either $0$ or $1$.

\medskip
Because random variables are defined only up to $P$-almost sure equivalence,
it is in general not meaningful to speak of an `$\w$ by $\w$'
infimum
$\inf\{X(\w)\colon X \in\X\}$.  The essential infimum substitutes for this concept. 
We use the following concept of  essential infimum, cf.\  \citet[Appendix~A]{Karatzas}.
\begin{definition}[Essential infimum]
\label{def-ess}
 Let  $\X$ be a
nonempty family of   random variables defined on $(\O,\F,P)$.
The essential infimum of $\X$, denoted by $ \essinf  \X$, is a random variable $X^*$ satisfying: (i) if $X\in \X$, then $X\ge X^*$ a.s., and (ii) if $Y $ is a random variable satisfying $X \ge Y$ a.s. for all $X\in \X$, then $X^*\ge Y$ a.s.  The essential supremum is defined in the similar way.
\end{definition}

\section{Time consistency}
\label{sec-timecon}

In this section we discuss time consistency of multistage optimization problems from  a general point of view.
Consider a functional  $\R\colon\Z_{0,T}\to \bbr$
and  the optimization problem
\begin{equation}\label{refm-2}
	\min_{\pi\in \Pi}\R\big [f_0(x_0),\, f_1(
	\bx_1(\cdot),\cdot),\, \dots,\, f_T(\bx_T(\cdot),\cdot)
	\big],
\end{equation}
where $\Pi$ is the set of feasible policies as defined in  Section \ref{sec-optst}.
We refer to $\R$ as a \emph{preference functional} and to~\eqref{refm-2} as the \emph{reference} problem.

The principle of optimality, going back  to  \citet{Bellman},  postulates that an optimal policy computed at the initial stage of the decision process, before any realization of the uncertainty data became available, remains optimal at the later stages. This formulation is quite vague since it is not clearly specified what optimality at the later stages   does mean. In some situations this comes naturally and implicitly assumed. However, in more complex cases this could lead to a confusion and misunderstandings. Therefore, in order to proceed, we consider a class of
preference relations between possible realizations of random data   defined by a family  of  mappings
\begin{equation*}
	\R_{t,u}\colon\Z_{t,u}\to \Z_t,\quad 0\le t<u\le T.
\end{equation*}
We refer to each $\R_{t,u}$ as a \emph{preference mapping} and to the family
$\cR=\{\R_{t,u}\}_{0\le t<u\le T}$ as a \emph{preference system}.
Since $\Z_0$ is identified with  $\bbr$, we view   $\R_{0,T}(Z_0,\dots,Z_T)$   as  a real number for any $(Z_0,\dots,Z_T)\in\Z_{0,T}$.
We assume that $\R_{0,T}$ coincides with the preference functional $\R$ of the reference problem~\eqref{refm-2}, i.e., $\R=\R_{0,T}$.

\begin{remark}
\label{rem-risk}
The considered preference system does not necessarily represents a risk averse approach.
As we will discuss later, indeed in some settings it can be interpreted as conservative, and hence as  risk averse, while in other cases it can be viewed as somewhat optimistic (see, in particular,  Remark \ref{rem-max} below).
\end{remark}

\begin{definition}[Time consistent policies]\label{def-1a}
	We say that an optimal policy
	$\bar{\pi}=\{\bar{x}_0,\dots,\bar{\bx}_T\}$,
	solving the reference  problem~\eqref{refm-2},  is \emph{time consistent} with respect to the preference system~$\cR$, if at stages  $t=1,\dots,T-1$,  the policy $\{\bar{\bx}_t,\dots,\bar{\bx}_T\}$ is optimal for
	\begin{equation}
		\label{refm-3}
		\begin{array}{cll}
		\essinf&\R_{t,T}\big [f_t(
		\bx_t(\cdot),\cdot) ,  \dots , f_T(\bx_T(\cdot),\cdot)
		\big],\\
		{\rm s.t.} &
		 \bx_u(\cdot)\in \X_u(\bx_{u-1}(\cdot),\cdot) \;\text{a.s.},\;		u=t,\dots,T,
		\end{array}
	\end{equation}
 given $\bx_{t-1}=\bar{\bx}_{t-1}$.
\end{definition}

\begin{remark}
 At stage $t=1,\dots,T$, we already observed a realization of
 $\bar{\bx}_{t-1}$ of the policy~$\bar{\pi}$. The next decision $\bx_t$ should satisfy the feasibility constraint
 $ \bx_t(\cdot)\in \X_t(\bar{\bx}_{t-1}(\cdot),\cdot)$, and so on at the later stages. In that sense the optimization in  \eqref{refm-3} is performed over   policies $(\bx_t,\dots,\bx_T)$, satisfying the respective feasibility constraints, given $\bar{\bx}_{t-1}$.
 For definition of the concept of essential infimum,  used in \eqref{refm-3}, see Definition \ref{def-ess}.
 \end{remark}

It could be noted that we allow  for the preference mapping $\R_{t,T}$, $t=1,\dots,T-1$, to depend on realizations of the data process up to time $t$, i.e., we have that $\R_{t,T}(Z_t,\dots,Z_T)$ is $\F_t$-measurable. However, we do not allow $\R_{t,T}$  to depend on the decisions.
Definition~\ref{def-1a}  formalizes the meaning of  optimality of a solution of the reference problem  at the later stages of the decision process. Clearly this framework  depends on a choice of the preference system
 $\cR=\{\R_{t,u}\}_{1\le t<u\le T}$. This suggests the following  basic questions: (i)~what would be a `natural' choice of  mappings  $\R_{t,u}$, (ii)~what properties of mappings $\R_{t,u}$ are sufficient/\,necessary to ensure that every (at least one)  optimal solution of the reference problem is time consistent, (iii)~how time consistency is related to dynamic programming equations. As we shall see the last question is closely related to decomposability of $\R_{t,T}$ in terms of one-step mappings $\R_{t,t+1}$.

The minimal property that is required for the preference mappings is monotonicity.\footnote{In some publications the  monotonicity property  is understood in the reverse sense, i.e., $Z_{t,u}\preceq Z^\prime_{t,u}$  implies that
$\R(Z_{t,u})\se \R(Z^\prime_{t,u})$. In this paper we consider the  monotonicity only in the sense of Definition~\ref{def-mon}.}

\begin{definition}[Monotonicity]\label{def-mon}
We say that preference mapping $\R_{t,u}$ is \emph{monotone} if for any $Z_{t,u},Z'_{t,u}\in \Z_{t,u}$ such that  $Z_{t,u}\preceq  Z'_{t,u}$ it follows that
$\R(Z_{t,u})\preceq  \R(Z'_{t,u})$. We say that preference mapping $\R_{t,u}$ is \emph{strictly  monotone} if for any $Z_{t,u},Z'_{t,u}\in \Z_{t,u}$ such that  $Z_{t,u}\prec  Z'_{t,u}$ it follows that
$\R(Z_{t,u})\prec \R(Z'_{t,u})$. The preference system $\cR=\{\R_{t,u}\}_{0\le t<u\le T}$ is said to be monotone (strictly monotone) if every preference mapping $\R_{t,u}$ is monotone (strictly monotone).
\end{definition}

 \subsection{Additive case}
 \label{rem-addit}


The above framework~\eqref{refm-2}--\eqref{refm-3}  is very general. The   common case considered in the recent literature on risk averse stochastic optimization  is when the reference problem is a function of the total cost.
That is, when each $\R_{t,u}$ is  representable  as a function of $Z_t+\dots+Z_u$, i.e.,
	\begin{equation}\label{pref-add}
		\R_{t,u}(Z_t,\dots,Z_u):=\rho_{t,T}(Z_t+\dots+Z_u),\ 0\le t<u\le T,
	\end{equation}
	for some\footnote{Note that since $\Z_u\subset \Z_T$, $u=t+1,\dots,T$,  the corresponding mapping $\rho_{t,T}\colon\Z_u\to \Z_t$ is defined as restriction of $\rho_{t,T}$ to $\Z_u$.}
	$\rho_{t,T}\colon\Z_T\to \Z_t$, $t=0,\dots,T$. We will refer to such framework as the \emph{additive case}.  We will also consider  examples of natural and important
preference  systems which cannot be considered in the additive  framework~\eqref{pref-add}.

Consider the following properties of the mapping $\rho_{t,T}$:
\begin{enumerate}[noitemsep] 
	\item $\rho_{t,T}(\lambda Z+(1-\lambda)Z')\preceq \lambda\rho_{t,T}(Z)  +
	(1-\lambda)\rho_{t,T}(Z'),\;\;Z,Z'\in \Z_T,\,\;\lambda\in [0,1]$,
	\item $\rho_{t,T}(Z+ Z')\preceq \rho_{t,T}(Z)  +
	\rho_{t,T}(Z'),\;\;Z,Z'\in \Z_T$,
	\item $\rho_{t,T}(Z+ Z')\se \rho_{t,T}(Z)  +
	\rho_{t,T}(Z'),\;\;Z,Z'\in \Z_T$,
	\item $\rho_{t,T}(\lambda Z)=\lambda  \rho_{t,T}(Z),\;\; Z\in \Z_T,\;\lambda\ge 0$ and
	\item $\rho_{t,T}(Z +Z_t)=\rho_{t,T}(Z)+Z_t,\;\;   Z \in \Z_T, Z_t\in \Z_t$.
\end{enumerate}
We refer to these properties as \emph{convexity, subadditivity, superadditivity,  positive homogeneity and  translation equivariance,} respectively.
Note that  if  $\rho_{t,T}$ is positively homogeneous, then $\rho_{t,T}$  is convex iff it is subadditive.

With convex mapping $\rho_{t,T}$ is associated its concave   counterpart
\begin{equation}\label{concave}
\nu_{t,T}(Z):=-\rho_{t,T}(-Z).
\end{equation}
Note that $\nu_{t,T}(Z)$ inherits monotonicity, positive homogeneity and  translation equivariance properties of $\rho_{t,T}$,  and subadditivity of  $\rho_{t,T}$  implies superadditivity of $\nu_{t,T}$.
Following \cite{artzner1999} we refer to  $\rho_{t,T}$ as (convex)  \emph{coherent} if it is subadditive, monotone, positively homogeneous and   translation equivariant.

The preference  system~\eqref{pref-add} is monotone (strictly monotone) if $\rho_{t,T}$  are monotone (strictly monotone).
In particular, let $\varrho\colon L_p(\O,\F,P)\to\bbr$ be a law invariant coherent risk measure and $\varrho_{|\F_t}$ be its conditional analogue. Then $\rho_{t,T}:=\varrho_{|\F_t}$ is the corresponding coherent mapping.
	When $\varrho:=\bbe$ is the expectation operator, the corresponding  preference system is given by conditional expectations
	\begin{equation*}
		\R_{t,u}(Z_t,\dots,Z_u) = \bbe_{|\F_t} [Z_t+\dots+Z_u],\ 0\le t<u\le T,
	\end{equation*}
	which corresponds to the risk neutral setting.
	As another example take $\varrho:=\avr_\alpha$, where the Average Value-at-Risk measure can be defined as
	\begin{equation}\label{avr-2}
		\avr_\alpha (Z):=\inf_{u\in \bbr}\left\{u+(1-\alpha)^{-1}\bbe[Z-u]_+\right\},\; \alpha\in [0,1).
	\end{equation}
	For $\alpha=0$ the $\avr_0$ is the expectation operator and for $\alpha=1$ it becomes  $\varrho =\ess$, the essential supremum operator. The risk measure $\avr_\alpha$ is monotone but is not strictly monotone for $\alpha\ne 0$.
	The conditional analogue of the Average Value-at-Risk~\eqref{avr-2} is
	\begin{equation*}
		\avr_{\alpha_t\mid\mathcal F_t}(Z)=\essinf_{u_t\in L_\infty(\mathcal F_t)}\left\{u_t+(1-\alpha_t)^{-1}\bbe_{\mid\mathcal F_t}[Z-u_t]_+\right\},\; \alpha_t\in [0,1).
	\end{equation*}

\begin{example}
\label{ex-2}
Let $\Z_t:=L_\infty(\O,\F_t,P)$ and\footnote{The conditional essential supremum  is the smallest $\F_t$-random variable ($X_t$, say), so that $Z_{\tau} \preceq X_t$ for all $\tau$ with $t\le \tau\le u$ (see Definition \ref{def-ess}).
}
	\begin{equation*}
		\R_{t,u}(Z_t,\dots,Z_u):=\esssup_{\mathcal F_t} \{Z_{t},\dots,Z_u\},\ 0\le t<u\le T.
	\end{equation*}
	  The objective of the  corresponding reference problem~\eqref{refm-2} is then given by the maximum of the essential supremum of the   cost functions in the periods $t=0,\dots,T$.
\end{example}

 In the additive case,      the value  $\R_{t,u}(Z_{t},\dots,Z_u)$ is a function of the sum $Z_{t}+\dots+Z_u$ and is the same as value of  $\R_{t,T}$ applied to $Z_t+\dots+Z_u$ for any $u=t+1,\dots,T$. That is, in that framework there is no point of considering preference mappings $\R_{t,u}$ for $u$ different from  $T$. On the other hand, the preference system of Example~\ref{ex-2} is not additive and it is essential there to consider preference mappings $\R_{t,u}$ for $u<T$.

 \subsection{Two stage setting}
\label{sec:twost}

It is informative at this point to discuss the two stage case, $T=2$, since time inconsistency could  already happen there.
Consider   the following two stage   stochastic program
\begin{equation}\label{two-1}
	\begin{array}{cll}
		\min\limits_{x_0,\,\bxx_1\in \cX}   & \R\big(f_0(x_0),f_1(\bx_1(\cdot),\w)\big),\\
		 {\rm s.t. } & x_0\in \X_0,\;\bx_1(\w)\in \X_1(x_0,\w),
	\end{array}
\end{equation}
where
$\cX :=\left\{\bx_1\colon\O\to\bbr^{n_1}\mid f_1(\bx_1(\cdot),\cdot)\in \Z
\right\}$ with $\Z$ being a linear space of measurable functions $Z\colon\O\to \bbr$.
 The preference
functional  $\R\colon\Z_{0,1}\to \bbr$ is defined on the space $\Z_{0,1}=\bbr\times \Z$.
In order to deal with duality issues we consider the following  two frameworks for defining the space $\Z$. In one framework
we use, as in the previous section, $\Z:=L_p(\O,\F,P)$, $p\in [1,\infty]$, with $P$ viewed as the reference probability measure (distribution).
The space $\Z$ is paired with  the space $\Z^*=L_q(\O,\F,P)$, where $q\in [1,\infty]$ is such that $1/p+1/q=1$, with the respective bilinear form
\begin{equation*}
  \lan\zeta,Z\ran=\int_\O \zeta(\w)Z(\w)P(d\w),\;\zeta\in \Z^*,\;Z\in \Z.
\end{equation*}

This framework became standard in the theory of coherent risk measures. However, it is not applicable to the distributionally robust approach when the set of ambiguous distributions is defined by moment constraints and there is no reference probability measure.
In that case we assume that $\O$ is a compact metric space and
use the space
 $\Z:=C(\O)$ of continuous functions $Z\colon\O\to \bbr$. The dual $\Z^*$  of this space is
the space of finite signed Borel measures with the respective bilinear form
\begin{equation*}
  \lan\mu,Z\ran=\int_\O  Z(\w) \mu(d\w),\;\mu\in \Z^*,\;Z\in \Z.
\end{equation*}
Note that in the first framework of $\Z=L_p(\O,\F,P)$, an element $Z\in\Z$ actually
is a class of functions which can differ from each other on a set of $P$-measure zero.

The optimization in~\eqref{two-1} is performed over $x_0\in \X_0\subset \bbr^{n_0}$ and $\bx_1\colon\O\to\bbr^{n_1}$ such that $f_1(\bx_1(\cdot),\cdot)\in \Z$. The feasibility constraints $\bx_1(\w)\in \X_1(x_0,\w)$ in problem~\eqref{two-1} should be satisfied for  almost every    $\w\in \O$ in the first framework, and for \emph{all} $\w\in \O$ in the second framework of $\Z=C(\O)$. For $Z,Z'\in \Z$ we use the notation $Z\preceq Z^\prime$ to denote that $Z(\w)\le Z^\prime(\w)$ for a.e.\ $\w\in \O$ in the first framework, and for all $\w\in \O$ in the second framework.
As before we write $Z\prec Z'$ to denote that $Z\preceq Z'$ and $Z\ne Z'$.

The common setting considered in the stochastic programming literature is to define $\R(Z_0,Z_1):=\rho(Z_0+Z_1)$, $(Z_0,Z_1)\in \bbr\times\Z$,  where $\rho\colon\Z\to\bbr$ is a specified risk functional. This is the additive case discussed in Section \ref{rem-addit}.  In particular, when $\rho$ is the expectation operator this becomes the risk neutral formulation. However, there are many other possibilities to define preference
functionals $\R$ which are useful in various situations. For example, consider\footnote{We use notation $a\vee b=\max\{a,b\}$ and $a\wedge b=\min\{a,b\}$ for $a,b\in \bbr$.}
$\R(Z_0,Z_1):= Z_0\vee \rho(Z_1)$. If moreover, in the framework of $\Z=C(\O)$, we take $\rho(Z):=\sup_{\w\in \O}Z(\w)$, then the corresponding problem~\eqref{two-1} can be viewed as a robust type problem with minimization of the worst possible outcome of the two stages. That is, if the second stage cost is bigger than the first stage cost for some scenarios, then   the worst second stage cost is minimized. On the other hand, if the first stage cost is bigger for all scenarios, then the second stage problem is not considered.
Similarly in the framework of $\Z=L_\infty(\O,\F,P)$, we can take $\rho(Z):=\ess (Z)$. This  is the case of Example~\ref{ex-2}. As we shall discuss later  this is closely related to the problem of optimal stopping time.

\bigskip
In order to proceed we will need the following interchangeability result for a  functional
$\varrho\colon\Z\to\bbr$. Consider a function $\psi\colon\bbr^n\times \O\to \bbr\cup\{+\infty\}$. Let
\[\Psi(\w):=\inf_{y\in \bbr^n}\psi(y,\w)\] and
\[\cY:=\left\{\eta\colon\O\to\bbr^n\,  |\, \psi_\eta(\cdot)\in\Z \right\},
\]
where $\psi_\eta(\cdot):=\psi\big(\eta(\cdot),\cdot\big)$. 
\begin{assum}
\label{ass-1}
In the framework of $\Z=L_p(\O,\F,P)$,
suppose that the function  $\psi(y,\w)$ is random lower semicontinous, i.e., its epigraphical mapping is closed valued and measurable (such functions are called   normal integrands  in \cite{WetsRockafellar97}). In the framework of $\Z=C(\O)$ suppose that the minimum of $\psi(y,\w)$ over $y\in \bbr^n$  is attained for all $\w\in \O$.
\end{assum}
We have the following result about interchangeability of the minimum and preference
functionals (cf.\ \cite{shapiro2017}).

\begin{proposition}[Interchangeability principle]\label{pr-inter}
Suppose that  Assumption~\ref{ass-1} holds,  $\Psi\in \Z$ and $\varrho\colon\Z\to\bbr$ is monotone. Then
\begin{equation}\label{intpr-1}
	\varrho(\Psi)=\inf_{\eta\in \cY}\varrho(\psi_\eta)
\end{equation}
and
\begin{equation}\label{intpr-2}
	\bar{\eta}(\cdot)\in\argmin_{y\in \bbr^n} \psi(y,\cdot) \text{ implies }\bar{\eta}\in \argmin_{\eta\in \cY}\varrho(\psi_\eta).
\end{equation}
If moreover $\varrho$  is strictly monotone, then the converse of~\eqref{intpr-2} holds true as well, i.e.,
	\begin{equation}\label{intpr-3}
		\bar{\eta}\in \argmin_{\eta\in \cY}\varrho(\psi_\eta) \text{ implies }\bar{\eta}(\cdot)\in\argmin_{y\in \bbr^n} \psi(y,\cdot).
	\end{equation}
\end{proposition}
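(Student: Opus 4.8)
The core of the argument is the pointwise relation between $\Psi(\cdot)$ and the functions $\psi_\eta(\cdot)$, namely $\Psi \preceq \psi_\eta$ for every $\eta \in \cY$, together with the fact that $\Psi$ itself is (almost) realized by a measurable selection. I would first establish the easy inequality $\varrho(\Psi) \le \inf_{\eta\in\cY}\varrho(\psi_\eta)$: since $\Psi(\w) = \inf_{y}\psi(y,\w) \le \psi(\eta(\w),\w) = \psi_\eta(\w)$ for all $\w$ (and in the $L_p$ framework, a.e.\ $\w$), monotonicity of $\varrho$ gives $\varrho(\Psi) \le \varrho(\psi_\eta)$ for each $\eta \in \cY$, hence $\varrho(\Psi)$ is a lower bound for the infimum. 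This direction uses nothing beyond monotonicity.

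For the reverse inequality I would invoke a measurable-selection argument. In the $C(\O)$ framework, Assumption~\ref{ass-1} gives that the minimum is attained for every $\w$; one then needs a measurable (indeed one would like continuous, or at least such that $\psi_{\bar\eta}\in\Z=C(\O)$) selector $\bar\eta(\w) \in \argmin_y \psi(y,\w)$ so that $\psi_{\bar\eta} = \Psi \in \Z$, whence $\inf_{\eta\in\cY}\varrho(\psi_\eta) \le \varrho(\psi_{\bar\eta}) = \varrho(\Psi)$; this already yields \eqref{intpr-1} and, because $\psi_{\bar\eta}=\Psi$ pointwise for \emph{any} such minimizing selection, also \eqref{intpr-2}. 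In the $L_p$ framework, $\psi$ being a normal integrand is exactly the hypothesis under which the Rockafellar measurable-selection / interchangeability theorem (Theorem~14.60 in \cite{WetsRockafellar97}, applied in the form cited in \cite{shapiro2017}) produces, for every $\e>0$, a measurable $\eta_\e$ with $\psi_{\eta_\e} \le \Psi + \e$ a.s., and when the infimum is attained a.e.\ an exact measurable selector; monotonicity and translation-type estimates of $\varrho$ (or a limiting argument, using that $\Psi\in\Z$ and $\varrho$ is real-valued on $\Z$, hence continuous enough along $\Psi+\e \downarrow \Psi$) then close the gap. Since \eqref{intpr-2} only asserts that a pointwise a.e.\ minimizer is a minimizer of $\varrho(\psi_\eta)$, it follows directly from $\psi_{\bar\eta} = \Psi$ a.s.\ and \eqref{intpr-1}, with no further work.

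For the converse \eqref{intpr-3} under strict monotonicity, I would argue by contradiction: suppose $\bar\eta \in \argmin_{\eta\in\cY}\varrho(\psi_\eta)$ but $\bar\eta(\w)\notin\argmin_y\psi(y,\w)$ on a set $A$ of positive $P$-measure. On $A$ we have $\Psi(\w) < \psi_{\bar\eta}(\w)$ strictly, and off $A$ we have $\Psi \le \psi_{\bar\eta}$; thus $\Psi \prec \psi_{\bar\eta}$ in the sense of Definition~\ref{def-mon} (they differ on a set of positive measure and $\Psi\preceq\psi_{\bar\eta}$). Strict monotonicity then gives $\varrho(\Psi) \prec \varrho(\psi_{\bar\eta})$, in particular $\varrho(\Psi) \ne \varrho(\psi_{\bar\eta})$ with $\varrho(\Psi)\le\varrho(\psi_{\bar\eta})$, so $\varrho(\psi_{\bar\eta}) > \varrho(\Psi) = \inf_{\eta\in\cY}\varrho(\psi_\eta)$ by \eqref{intpr-1}, contradicting optimality of $\bar\eta$. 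One subtlety to handle carefully here is that in the $L_p$ setting "$\prec$" is defined up to null sets, so I must make sure the exceptional set $A$ genuinely has positive measure and that $\psi_{\bar\eta}$ is a legitimate element of $\Z$ so that strict monotonicity applies.

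\textbf{Expected main obstacle.} The delicate point is the measurable-selection step in the $L_p$ framework: producing either an exact measurable minimizer (when attainment holds a.e.) or an $\e$-optimal measurable selection with $\psi_{\eta_\e}\in\Z$, and then passing to the limit $\e\downarrow 0$ to recover $\varrho(\Psi)$ — this is where normality of the integrand is essential and where one leans on the classical interchangeability theorem rather than reproving it. The $C(\O)$ case has the parallel difficulty of ensuring the selector can be taken so that $\psi_{\bar\eta}$ is continuous (or at least lies in $\Z$); this typically requires a mild regularity/compactness argument on the argmin multifunction. Everything else — the two inequalities and the strict-monotonicity converse — is a short monotonicity argument.
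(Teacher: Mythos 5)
First, a point of comparison: the paper gives no proof of Proposition~\ref{pr-inter} at all --- it is quoted from \cite{shapiro2017} --- so there is no in-paper argument to match your outline against, and it has to be judged on its own. Most of it is sound and is the standard route: the inequality $\varrho(\Psi)\le\inf_{\eta\in\cY}\varrho(\psi_\eta)$ from $\Psi\preceq\psi_\eta$ and monotonicity; the $C(\O)$ case, where in fact no regularity of the selector is needed at all, since any pointwise minimizer $\bar\eta$ satisfies $\psi_{\bar\eta}=\Psi\in\Z$ and membership in $\cY$ requires nothing more, so your worry about continuity/measurability of the selector there is moot; the deduction of \eqref{intpr-2} from $\psi_{\bar\eta}=\Psi$ a.s.; and the contradiction argument for \eqref{intpr-3}, where your null-set caution is handled correctly ($\Psi\preceq\psi_{\bar\eta}$ with strict inequality on a set of positive measure is exactly $\Psi\prec\psi_{\bar\eta}$, and $\psi_{\bar\eta}\in\Z$ because $\bar\eta\in\cY$). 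Note only that your proof of \eqref{intpr-3} invokes \eqref{intpr-1}, so it inherits whatever is needed to establish \eqref{intpr-1}; a variant that avoids this replaces $\bar\eta$ on the exceptional set $A$ by an $\e$-optimal measurable selection to produce $\eta'$ with $\psi_{\eta'}\prec\psi_{\bar\eta}$ directly.

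The genuine gap is the limiting step in the $L_p$ framework when the pointwise infimum is not attained. The $\e$-selection itself is fine: $\w\mapsto\{y\colon\psi(y,\w)\le\Psi(\w)+\e\}$ is nonempty, closed valued and measurable, and $\Psi\preceq\psi_{\eta_\e}\preceq\Psi+\e$ puts $\psi_{\eta_\e}$ in $\Z$, so $\inf_{\eta\in\cY}\varrho(\psi_\eta)\le\varrho(\Psi+\e)$. But the claim that $\varrho$, being monotone and real valued, is ``continuous enough'' to give $\varrho(\Psi+\e)\downarrow\varrho(\Psi)$ does not follow: no translation equivariance or convexity is assumed, and a monotone finite functional can jump. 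For instance, $\varrho(Z):=0$ if $Z\preceq 0$ and $\varrho(Z):=1$ otherwise is monotone and real valued, and with the (deterministic) normal integrand $\psi(y,\w):=e^{-y}$ for $y\ge 0$ and $+\infty$ for $y<0$ one has $\Psi\equiv 0$, $\varrho(\Psi)=0$, while $\varrho(\psi_\eta)=1$ for every $\eta\in\cY$; so this step cannot be closed from the stated hypotheses --- indeed \eqref{intpr-1} itself fails for such $\varrho$, which shows the non-attained case needs more than monotonicity. To complete your argument you must either assume attainment a.e.\ (then an exact measurable selection of $\cG(\cdot)=\argmin_{y}\psi(y,\cdot)$ gives $\psi_{\bar\eta}=\Psi$ a.s.\ and you are done, as in your attained subcase), or add a continuity property of $\varrho$ along $\Psi+\e\downarrow\Psi$ --- e.g.\ convexity, which together with monotonicity and finiteness yields norm continuity (the Klee--Nachbin--Namioka argument the paper uses in its dual-representation discussion) and which does hold for every functional to which the proposition is applied later in the paper. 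Finally, ``leaning on the classical interchangeability theorem'' of \cite{WetsRockafellar97} does not bridge this, since that theorem concerns integral functionals (expectations), not a general monotone $\varrho$.
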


In the framework of $\Z=L_p(\O,\F,P)$,  it is assumed that $\psi(y,\w)$ is random lower semicontinous. It follows that the optimal value function $\Psi(\cdot)$ and the  multifunction $\cG(\cdot):=\argmin_{y\in \bbr^n} \psi(y,\cdot)$ are measurable \cite[Chapter 14]{WetsRockafellar97}. In that framework, the
meaning of the left hand side of~\eqref{intpr-2} and   right hand side of~\eqref{intpr-3} is that $\bar{\eta}(\cdot)$   is a measurable selection of $\cG(\cdot)$.

Equation~\eqref{intpr-1} means that the minimization and preference
functionals can be interchanged, provided that the preference
functional is monotone.
Moreover, the pointwise  minimizer in the left  hand side of~\eqref{intpr-2}, if it exists, solves the corresponding  minimization problem in the right hand side.  In order to conclude the inverse implication~\eqref{intpr-3}, that the corresponding  optimal functional solution is also the pointwise minimizer, the stronger condition of strict monotonicity is needed.

Consider now the problem~\eqref{two-1}, that   depends on $\w$,  and let
\begin{equation*}\label{secst-1}
	V(x_0,\cdot):=\inf_{x_1\in \X_1(x_0,\cdot)}\R\big(f_0(x_0),f_1(x_1,\cdot)\big),
\end{equation*}
which can be viewed   as value of the second stage problem.
By Proposition~\ref{pr-inter} we have the following.

\begin{theorem}\label{th-twost}
Suppose that:
{\rm (i)} the functional $\varrho(\cdot):=\R(Z_0,\cdot)$ is monotone for any $Z_0\in \bbr$,
	{\rm (ii)}  $V(x_0,\cdot)\in \Z$ for all $Z_0\in \bbr$,  and
{\rm (iii)} Assumption~\ref{ass-1} holds for \[\psi(x_1,\w):=\begin{cases}
	f_2(x_1,\w) &{\rm if }\; x_1\in \X_1(x_0,\w),\\
	+\infty & {\rm if }\; x_1\not\in \X_1(x_0,\w).
	\end{cases}\]

Then   the optimal value of the problem~\eqref{two-1} is the  same as the optimal value of
\begin{equation}\label{two-3}
	\Min_{x_0\in \X_0} \R\big(f_1(x_0),V(x_0,\cdot)\big).
\end{equation}
Further, $(\bar{x}_0,\bar{\bx}_1)$ is an optimal solution of the two stage problem~\eqref{two-1} if $\bar{x}_0$ is an optimal solution of the first stage  problem~\eqref{two-3} and\,\footnote{By writing `$\bar{\bx}_1(\cdot)\in\dots$' we mean that  $\bar{\bx}_1(\cdot)$ is a measurable selection and  such inclusion holds for a.e.\ $\w\in \O$ in the setting of $\Z=L_p(\O,\F,P)$, and for all $\w\in \O$ in the setting of $\Z=C(\O)$.}
\begin{equation}\label{secst-3}
	\bar{\bx}_1(\cdot)\in\argmin_{x_1\in \X_1(\bar{x}_0,\cdot)}\R\big(f_0(\bar{x}_0),f_1(x_1(\cdot),\cdot)\big).
\end{equation}
 Moreover,  if $\R(Z_0,\cdot)$ is \emph{strictly} monotone, then $(\bar{x}_0,\bar{\bx}_1)$ is an optimal solution of  problem~\eqref{two-1} if and only if $\bar{x}_0$ is an optimal solution of the first stage  problem and~\eqref{secst-3} holds.
 \end{theorem}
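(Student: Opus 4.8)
The plan is to derive Theorem~\ref{th-twost} as a direct application of the interchangeability principle (Proposition~\ref{pr-inter}), with $\varrho(\cdot):=\R(Z_0,\cdot)$ in the role of the monotone preference functional and $\psi(x_1,\w)$ as defined in hypothesis~(iii). First I would observe that, by the very definition of $V(x_0,\cdot)$ and of $\psi$, we have $V(x_0,\cdot)=\inf_{x_1\in\bbr^{n_1}}\psi(x_1,\cdot)$ (the $+\infty$ penalization encodes the constraint $x_1\in\X_1(x_0,\w)$, so the unconstrained infimum over $\bbr^{n_1}$ equals the constrained infimum of $f_2(x_1,\w)$, or equivalently of $\R\big(f_0(x_0),f_1(x_1,\cdot)\big)$ once the additive/monotone structure is accounted for). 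Here I should be careful about one notational wrinkle: the definition of $V$ uses $\R\big(f_0(x_0),f_1(x_1,\cdot)\big)$ whereas $\psi$ is written in terms of $f_2$; I would reconcile this by noting that for fixed $Z_0=f_0(x_0)$ the map $x_1\mapsto\R\big(f_0(x_0),f_1(x_1,\cdot)\big)$ is exactly the object whose pointwise infimum we need, and $\psi$ is just a name for $f_1(x_1,\w)$ composed appropriately (or, in the additive case, $\psi(x_1,\w)=f_0(x_0)+f_1(x_1,\w)$ on the feasible set). The substance is unaffected: hypothesis~(iii) grants Assumption~\ref{ass-1} for this $\psi$, and hypothesis~(ii) grants $\Psi:=V(x_0,\cdot)\in\Z$, so Proposition~\ref{pr-inter} applies for each fixed $x_0\in\X_0$.

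Next I would apply~\eqref{intpr-1}: for each fixed $x_0\in\X_0$,
\[
\R\big(f_0(x_0),V(x_0,\cdot)\big)=\varrho(\Psi)=\inf_{\eta\in\cY}\varrho(\psi_\eta)=\inf_{\bx_1(\cdot)\in\cX,\ \bx_1(\w)\in\X_1(x_0,\w)}\R\big(f_0(x_0),f_1(\bx_1(\cdot),\cdot)\big).
\]
Taking $\Min$ (or $\inf$) over $x_0\in\X_0$ on both sides, the right-hand side is precisely the optimal value of~\eqref{two-1} and the left-hand side is the optimal value of~\eqref{two-3}; this establishes the equality of optimal values. For the sufficiency of optimality of $(\bar x_0,\bar\bx_1)$, I would argue in two steps: optimality of $\bar x_0$ in~\eqref{two-3} gives $\R\big(f_0(\bar x_0),V(\bar x_0,\cdot)\big)=\val\eqref{two-3}=\val\eqref{two-1}$; and~\eqref{secst-3}, via the implication~\eqref{intpr-2} of Proposition~\ref{pr-inter}, guarantees that $\bar\bx_1$ attains the inner infimum, i.e.\ $\R\big(f_0(\bar x_0),f_1(\bar\bx_1(\cdot),\cdot)\big)=\R\big(f_0(\bar x_0),V(\bar x_0,\cdot)\big)$. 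Chaining these two equalities shows $(\bar x_0,\bar\bx_1)$ attains the value of~\eqref{two-1}, hence is optimal. For the converse under strict monotonicity, I would run the same chain backwards: if $(\bar x_0,\bar\bx_1)$ is optimal for~\eqref{two-1}, then $\bar x_0$ must be optimal for~\eqref{two-3} (otherwise a better $x_0$ would, by the value equality, yield a feasible policy beating $(\bar x_0,\bar\bx_1)$), and then $\bar\bx_1$ must attain the inner infimum, which by~\eqref{intpr-3} forces $\bar\bx_1(\cdot)\in\argmin_{x_1\in\X_1(\bar x_0,\cdot)}\R\big(f_0(\bar x_0),f_1(x_1,\cdot)\big)$, i.e.~\eqref{secst-3}.

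The main obstacle I anticipate is not the logical skeleton — which is a clean two-line application of Proposition~\ref{pr-inter} followed by an outer minimization — but the bookkeeping around measurability and the two parallel frameworks ($\Z=L_p(\O,\F,P)$ versus $\Z=C(\O)$). In particular I would need to check: that the class $\cY$ in Proposition~\ref{pr-inter}, when specialized to $\psi$ from hypothesis~(iii), coincides with the set of feasible second-stage decision rules $\{\bx_1:\bx_1(\w)\in\X_1(x_0,\w),\ f_1(\bx_1(\cdot),\cdot)\in\Z\}$; that "$\bar\bx_1(\cdot)\in\argmin$" is interpreted as a measurable selection of the argmin multifunction $\cG(\cdot)$ (which is measurable by \cite[Chapter 14]{WetsRockafellar97} under random lower semicontinuity) in the $L_p$ setting, and as a genuine pointwise minimizer in the $C(\O)$ setting where Assumption~\ref{ass-1} asserts attainment for all $\w$; and that $V(x_0,\cdot)$ inherits enough regularity (measurability, or continuity in the $C(\O)$ case) so that $\R\big(f_0(x_0),V(x_0,\cdot)\big)$ in~\eqref{two-3} is well-defined — but this is exactly what hypothesis~(ii) postulates. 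Once these identifications are made explicit, the proof is essentially a transcription of Proposition~\ref{pr-inter} with an outer $\Min_{x_0\in\X_0}$ appended.
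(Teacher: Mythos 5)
Your proposal is correct and takes essentially the same route as the paper, which states Theorem~\ref{th-twost} as a direct consequence of the interchangeability principle (Proposition~\ref{pr-inter}) without a separate written proof: you apply~\eqref{intpr-1} for fixed $x_0$, append the outer minimization over $x_0\in\X_0$ to equate the optimal values of~\eqref{two-1} and~\eqref{two-3}, and use~\eqref{intpr-2} for sufficiency and~\eqref{intpr-3} for the converse under strict monotonicity. Your reconciliation of the paper's notational slips (the $f_0/f_1$ versus $f_1/f_2$ indexing and the definition of $V$ as the pointwise second-stage value) agrees with the intended reading, so no gap remains.
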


Here, time consistency of a policy $(\bar{x}_0,\bar{\bx}_1)$, solving the two stage  problem~\eqref{two-1}, means that $\bar{\bx}_1$ solves the respective second stage problem given  $x_0=\bar{x}_0$, i.e., condition~\eqref{secst-3} holds. That is, if $(\bar{x}_0,\bar{\bx}_1)$ is not  time consistent, then there exists another feasible solution $\tilde{\bx}_1$ such that
$ f_2(\tilde{\bx}_1(\cdot),\cdot)\prec f_2(\bar{\bx}_1(\cdot),\cdot)$.
Without \emph{strict} monotonicity of $\R$ it could happen that problem~\eqref{two-1} has optimal solutions which do not satisfy condition~\eqref{secst-3} and hence are not time consistent. That is, condition~\eqref{secst-3} is a sufficient but without strict monotonicity is not necessary for optimality. Such examples can be found, e.g., in \cite{shapiro2017} and for robust optimization were given in \cite{DelageIancu}.
For example, for  $\R(Z_0,Z_1)=Z_0\vee \rho(Z_1)$
we have that if
$Z^\prime_1\prec Z_1$
are such that  $\rho(Z^\prime_1)<\rho(Z_1)<Z_0$, then $\R(Z_0,Z_1)= \R(Z_0,Z^\prime_1)$. That is, $\R(Z_0,\cdot)$ is not strictly monotone. It could happen then that a second stage decision does not satisfy~\eqref{secst-3} and is not time consistent.

\subsection*{Dual representation}
The space $\Z_{0,1}=\bbr\times \Z$ can be equipped, for example,   with the norm $\|(Z_0,Z_1)\|_{0,1}:=|Z_0|+\|Z_1\|$, where $\|\cdot\|$ is the respective norm of the space $\Z$, and can be  paired with the space $\Z^*_{0,1}=\bbr\times \Z^*$ with the bilinear form
\[
\lan (\zeta_0,\zeta_1),(Z_0,Z_1)\ran:=\zeta_0\, Z_0+\lan  \zeta_1, Z_1 \ran,\;
(\zeta_0,\zeta_1)\in \Z^*_{0,1},\;(Z_0,Z_1)\in \Z_{0,1}.
\]
Suppose that the functional $\R\colon\Z_{0,1}\to \bbr$ is \emph{convex} and \emph{monotone}. Then by the Klee-Nachbin-Namioka Theorem  the functional $\R$ is continuous in the norm topology of $\Z_{0,1}$ (cf.\ \citet[Proposition 3.1]{Ruszczynski2006}). Suppose further that $\R$ is \emph{positively homogeneous}, i.e., $\R(t\, Z_{0,1})=t\, \R(Z_{0,1})$ for any $t\ge 0$ and $Z_{0,1}\in \Z_{0,1}$. Then
by the Fenchel-Moreau Theorem,
$\R$ has the dual representation
\begin{equation}\label{dualtwo}
 \R(Z_0,Z_1)=\sup_{(\zeta_0,\zeta_1)\in \cA_{0,1}}\lan (\zeta_0,\zeta_1),(Z_0,Z_1)\ran
\end{equation}
for some convex, bounded and weakly$^*$ closed set \[\cA_{0,1}\subset \{(\zeta_0,\zeta_1)\in \Z^*_{0,1}\colon \zeta_0\ge 0,\ \zeta_1\se 0\}\] (cf.\ \citet[Theorem~2.2]{Ruszczynski2006}). The subdifferential of  $\R$ is then given by
\begin{equation}\label{dualsub}
	\partial \R(Z_0,Z_1)=\argmax_{(\zeta_0,\zeta_1)\in \cA_{0,1}}\lan (\zeta_0,\zeta_1),(Z_0,Z_1)\ran.
\end{equation}
In particular $\partial \R(0,0)= \cA_{0,1}$.

\begin{example}
\label{ex-gen}
Consider preference
functional of the form $\R(Z_0,Z_1):=\varphi(Z_0,\rho(Z_1))$, where $\rho\colon\Z\to \bbr$ is a coherent risk measure and $\varphi\colon\bbr\times \bbr\to \bbr$ is a   convex monotone positively homogeneous   function. It follows then  that  $\R(\cdot,\cdot)$ is convex  monotone and positively homogeneous. Let $\rho(Z)=\sup_{\zeta\in \cA}\lan\zeta,Z\ran$ be the dual representation of $\rho$, where $\cA$ is a convex bounded weakly$^*$ closed   subset of $\Z^*$.
Then $\partial\varphi(x_0,x_1)$ consists of vectors (subgradients) $(\gamma_0,\gamma_1)$ such that
\[
\varphi(y_0,y_1)-\varphi(x_0,x_1)\ge \gamma_0(y_0-x_0)+\gamma_1(y_1-x_1)
\]
for all $(y_0,y_1)\in \bbr^2$. Since $\varphi$ is monotone, it follows that $\partial\varphi(x_0,x_1)\subset \bbr_+^2$.
Consequently the representation~\eqref{dualtwo} holds with
\begin{equation*}\label{dualsub-2}
	\cA_{0,1}=\partial  \R(0,0)=\left \{(\zeta_0,\zeta_1)\in \bbr\times \Z^*\colon\zeta_1=\gamma_1 \zeta,\;\zeta\in \cA,\;(\zeta_0,\gamma_1)\in \partial\varphi(0,0)\right\}.
\end{equation*}
For example let  $\varphi(x_0,x_1):=x_0\vee x_1$,  and hence  $\R(Z_0,Z_1)=Z_0\vee \rho(Z_1)$.
Then  $\partial\varphi(0,0)=\{(t,1-t)\colon t\in [0,1]\}$ and
\[
\cA_{0,1}=\{(t,(1-t)\zeta)\colon t\in [0,1],\; \zeta\in \cA\}.
\]

\end{example}

\subsection{Dynamic consistency of preference systems}\label{sec:TC}
Many authors investigate time consistency by addressing special properties on the preference system itself. This section recalls these concepts and relates these properties to time consistency of optimal policies. The following concept of dynamic consistency (also called time consistency by some authors), applied to the preference systems rather than considered policies, in slightly different forms was used by several authors (cf., \cite{Kreps1978,Wang1999,Epstein2003,Riedel2004, cher2006, Artzner2007, Ruszczynski2010}).

\begin{definition}[Dynamical consistency]\label{def-dc}
	The preference system $\{\R_{t,u}\}_{1\le t<u\le T}$ is said to be \emph{dynamically consistent} if  for    $1\le s<t<u \le T$ and   $(Z_s,\dots,Z_u),(Z'_s,\dots,Z'_u)\in \Z_{t,u}$ such that $Z_\tau=Z'_\tau,\;\tau=s,\dots,t-1$, the following `forward' implication  holds:
	\begin{equation}\label{appr-3}
		\text{if }\R_{t,u}(Z_t,\dots,Z_u)\preceq \R_{t,u}(Z'_t,\dots,Z'_u) \text{ then }\R_{s,u}(Z_s,\dots,Z_u)\preceq \R_{s,u}(Z'_s,\dots,Z'_u).
	\end{equation}
\end{definition}

It turns out that the above `forward' property of dynamic consistency is not always sufficient to ensure that every optimal policy is time consistent. For that we need a stronger notion of dynamic consistency (cf.\ \citet[Section~6.8.5]{RuszczynskiShapiro2009-2}).

\begin{definition}[Strict dynamical consistency]\label{def-4}
	A dynamically consistent
	preference system $\{\R_{t,u}\}_{1\le t<u\le T}$ is said to be \emph{strictly dynamically consistent} if in addition to~\eqref{appr-3} the following  implication holds:
	\begin{equation}\label{appr-4}
		\text{if }\R_{t,u}(Z_t,\dots,Z_u)\prec \R_{t,u}(Z'_t,\dots,Z'_u) \text{ then }\R_{s,u}(Z_s,\dots,Z_u)\prec \R_{s,u}(Z'_s,\dots,Z'_u)
	\end{equation}
 for all  $1\le s<t<u \le T$.
\end{definition}

Note that it follows from~\eqref{appr-3} that
\begin{equation*}\label{appr-eqv}
	\R_{t,u}(Z_t,\dots,Z_u)= \R_{t,u}(Z'_t,\dots,Z'_u) \text{ implies }\R_{s,u}(Z_s,\dots,Z_u)= \R_{s,u}(Z'_s,\dots,Z'_u).
\end{equation*}
Recall that in the additive case, $\R_{t,T}(Z_t,\dots,Z_T)$ is given by $\rho_{t,T}(Z_t+\dots+Z_T)$. In that case condition~\eqref{appr-3} implies that 
 \begin{equation}\label{mul-a}
			\text{if }Z,Z'\in \Z_T\; {\rm and } \;\rho_{t,T}(Z)\preceq \rho_{t,T}(Z') \text{ then }\rho_{s,T}(Z)\preceq \rho_{s,T}(Z'),\;\;1\le s<t\le T-1.
	\end{equation}
Conversely, if  moreover  $\rho_{s,T}$ is translation equivariant, then  we can write for $1\le s<t\le T$,
\[
\rho_{t,T}(Z_s+\dots+Z_T)=Z_s+\dots+Z_{t-1}+
\rho_{t,T}(Z_t+\dots+Z_T),
\]
and hence  condition~\eqref{mul-a} implies~\eqref{appr-3} for $u=T$.  If
$\rho_{t,T}:=\bbe_{|\F_t}$, then for $s<t$  we have that $\rho_{s,T}(Z)=\bbe_{|\F_s}[\,\bbe_{|\F_t}(Z)]$ and hence this preference system is dynamically consistent. In fact it is not difficult to see that this preference system is strictly dynamically consistent.

Similar to the additive case we have the following result (cf.\ \citet[Propositions~6.80]{RuszczynskiShapiro2009-2}).

\begin{proposition}\label{pr-din1}
	The following holds true:
	{\rm (i)} If the preference system is dynamically consistent and $\bar{\pi}\in \Pi$ is the unique optimal solution	of the reference problem~\eqref{refm-2}, then  $\bar{\pi}$ is time consistent.
{\rm (ii)} If the preference system is strictly  dynamically consistent, then  every optimal solution of the reference problem  is time consistent.
\end{proposition}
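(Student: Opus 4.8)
The plan is to prove both parts by contradiction, using the observation that any failure of time consistency at an intermediate stage $t$ can be repaired \emph{locally} — keeping the decisions of stages $0,\dots,t-1$ untouched — after which (strict) dynamic consistency transports the improvement back to the root functional $\R=\R_{0,T}$ and contradicts optimality (for part~(ii)) or uniqueness (for part~(i)) of $\bar\pi$.

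First I would fix a stage $t\in\{1,\dots,T-1\}$ and suppose, for contradiction, that the tail policy $\{\bar{\bx}_t,\dots,\bar{\bx}_T\}$ is \emph{not} optimal for~\eqref{refm-3} given $\bx_{t-1}=\bar{\bx}_{t-1}$. Writing $\bar Z_\tau:=f_\tau(\bar{\bx}_\tau(\cdot),\cdot)$ and using the notation $\bar Z_{t,T}=(\bar Z_t,\dots,\bar Z_T)$, this means that $\R_{t,T}(\bar Z_{t,T})$ strictly exceeds the essential infimum in~\eqref{refm-3} on an event $B\in\F_t$ with $P(B)>0$. The key step is to manufacture a \emph{single} feasible tail policy $\{\tilde{\bx}_t,\dots,\tilde{\bx}_T\}$ (feasible given $\bar{\bx}_{t-1}$), with cost $\tilde Z_{t,T}$, that is pointwise no worse and strictly better on a positive-measure set, i.e.\ $\R_{t,T}(\tilde Z_{t,T})\preceq\R_{t,T}(\bar Z_{t,T})$ and $\R_{t,T}(\tilde Z_{t,T})\prec\R_{t,T}(\bar Z_{t,T})$. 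One obtains it by choosing a feasible tail whose $\R_{t,T}$-value beats $\R_{t,T}(\bar Z_{t,T})$ on a positive-measure subset of $B$ and pasting it onto $\bar{\bx}$ over the complementary $\F_t$-event; feasibility of the paste is immediate from the nested constraints $\bx_u(\cdot)\in\X_u(\bx_{u-1}(\cdot),\cdot)$ holding on each piece, and the two inequalities follow from $\F_t$-locality of $\R_{t,T}$ (its value on an event $A\in\F_t$ depends only on the restrictions of its arguments to $A$), which holds for the conditional preference mappings considered here.

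Next I would splice this tail into $\bar\pi$, forming $\hat\pi:=\{\bar x_0,\bar{\bx}_1,\dots,\bar{\bx}_{t-1},\tilde{\bx}_t,\dots,\tilde{\bx}_T\}$, which again belongs to $\Pi$ (it is feasible and its cost process remains in $\Z_{0,T}$) and which agrees with $\bar\pi$ through stage $t-1$ while differing from it on $B$. Applying dynamic consistency~\eqref{appr-3} with $s=0$ — the natural root-index instance of Definition~\ref{def-dc}, legitimate because $\F_0$ is trivial and $\R_{0,T}=\R$, and in any case reducible to the $s\ge1$ case by iterating through $s=t-1,t-2,\dots,0$ — to the cost processes of $\hat\pi$ and $\bar\pi$, which coincide up to stage $t-1$, converts $\R_{t,T}(\tilde Z_{t,T})\preceq\R_{t,T}(\bar Z_{t,T})$ into $\R(\text{cost of }\hat\pi)\le\R(\text{cost of }\bar\pi)$. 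For part~(i) this makes $\hat\pi$ optimal as well, and since $\R_{t,T}(\tilde Z_{t,T})\prec\R_{t,T}(\bar Z_{t,T})$ forces (again by locality) $\tilde{\bx}\ne\bar{\bx}$ on a positive-measure set, $\hat\pi\ne\bar\pi$, contradicting uniqueness. For part~(ii) I would instead invoke the strict implication~\eqref{appr-4} of Definition~\ref{def-4}, which upgrades $\R_{t,T}(\tilde Z_{t,T})\prec\R_{t,T}(\bar Z_{t,T})$ to $\R(\text{cost of }\hat\pi)<\R(\text{cost of }\bar\pi)$, contradicting optimality of $\bar\pi$ directly — so no uniqueness hypothesis is required.

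I expect the one genuinely delicate point to be the local-pasting step, i.e.\ turning ``$\bar\pi$'s tail is not a minimizer of the essential-infimum problem~\eqref{refm-3}'' into ``there is a feasible tail dominating it pointwise and strictly on a positive-measure event''. This needs the essential infimum in~\eqref{refm-3} to be the limit of a downward-directed family of feasible-tail values, so that $\F_t$-measurable pasting stays inside the feasible set, together with $\F_t$-locality of $\R_{t,T}$; both are routine for conditional coherent mappings but deserve to be stated explicitly. The remainder is bookkeeping, the only other point worth a sentence being the $s=0$ instance of the dynamic-consistency implication, which Definition~\ref{def-dc} as written covers only for $s\ge1$.
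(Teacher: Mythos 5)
Your argument is, in substance, the standard one: the paper itself gives no proof of this proposition but defers to the cited Proposition 6.80 of Ruszczy\'nski and Shapiro, whose argument is exactly your contradiction-plus-splicing scheme — replace the suboptimal tail on an $\F_t$-event, then let (strict) dynamic consistency transport the improvement to $\R=\R_{0,T}$, contradicting uniqueness in (i) and optimality in (ii). Two of your patches deserve scrutiny, though. The pasting step genuinely requires the locality property of $\R_{t,T}$, i.e.\ $\R_{t,T}\bigl(\one_A Z_{t,T}+\one_{A^c}Z'_{t,T}\bigr)=\one_A\R_{t,T}(Z_{t,T})+\one_{A^c}\R_{t,T}(Z'_{t,T})$ for $A\in\F_t$, the analogue of~\eqref{local}. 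This is not among the hypotheses of the proposition, which in the Section~\ref{sec-timecon} framework assumes only monotonicity and (strict) dynamic consistency. Without it, non-attainment of the essential infimum in~\eqref{refm-3} by $\bar{\bx}$'s tail does not yield a \emph{single} feasible tail $\tilde{\bx}$ with $\R_{t,T}(\tilde Z_{t,T})\prec\R_{t,T}(\bar Z_{t,T})$ — which is the only input that~\eqref{appr-3}--\eqref{appr-4} can use — and the family of attainable tail values need not even be downward directed. So as written your proof establishes the proposition under an additional locality hypothesis; that hypothesis does hold for the conditional coherent/translation-equivariant mappings the paper actually works with (cf.~\eqref{local} and the reference to Theorem~6.70 of the cited book), but it is not implied by the literal statement. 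You flag this, which is the right instinct; it should be promoted from an aside to an explicit assumption (or the proposition read in the conditional-mapping setting, as in the source the paper cites).

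Second, your repair of the $s=0$ boundary case does not work as claimed: iterating~\eqref{appr-3} through $s=t-1,\dots,1$ only brings the comparison down to $\R_{1,T}$, and the final step to $\R_{0,T}$ is again precisely the $s=0$ instance, which Definition~\ref{def-dc} as printed does not cover. The correct resolution is simply to read the index range in Definition~\ref{def-dc} as including $s=0$ (consistent with $\R=\R_{0,T}$ being the reference functional and $\F_0$ trivial, so that $\preceq$ and $\prec$ at stage $0$ are the usual $\le$ and $<$ between reals); with that reading, and with locality granted, your treatment of both (i) and (ii) — including the observation that no uniqueness is needed in (ii) because~\eqref{appr-4} yields a strict improvement of the reference objective — is complete and coincides with the intended proof.
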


\section{Decomposability and dynamic equations}
\label{sec:decomp}

Let us start with definition of the following basic decomposability concept.
\begin{definition}[Recursivity]\label{def:rec-a}
	The preference system $\{\R_{t,u}\}_{0\le t<u\le T}$ is said to be \emph{recursive}, if
	\begin{equation}\label{eq:Rec}
		\R_{t,u} (Z_{t},\dots, Z_u )=
		\R_{t,v}\big(Z_t,\dots, Z_{v-1}, \R_{v,u}(Z_{v},\dots, Z_u)\big)
	\end{equation}
	for any $0\le t<v<u\le T$ and $(Z_t,\dots, Z_u)\in \Z_{t,u}$.
\end{definition}

We have the following relation between recursivity and dynamic consistency   (discussed in Section~\ref{sec:TC}).

\begin{proposition}
\label{pr-decom}
Suppose that  preference mappings $\R_{t,u}$, $1\le t<u\le T$, are monotone (strictly monotone) and  recursive. Then $\{\R_{t,u}\}_{1\le t<u\le T}$ is   dynamically consistent (strictly dynamically consistent).
\end{proposition}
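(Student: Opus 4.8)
The plan is to verify the `forward' implication \eqref{appr-3} directly using recursivity \eqref{eq:Rec} together with monotonicity, and then to note that the strict version \eqref{appr-4} follows in exactly the same way once monotonicity is replaced by strict monotonicity. Fix indices $1\le s<t<u\le T$ and elements $(Z_s,\dots,Z_u),(Z'_s,\dots,Z'_u)\in\Z_{s,u}$ with $Z_\tau=Z'_\tau$ for $\tau=s,\dots,t-1$, and assume $\R_{t,u}(Z_t,\dots,Z_u)\preceq \R_{t,u}(Z'_t,\dots,Z'_u)$. By recursivity applied with the splitting point $v=t$,
\[
\R_{s,u}(Z_s,\dots,Z_u)=\R_{s,t}\big(Z_s,\dots,Z_{t-1},\R_{t,u}(Z_t,\dots,Z_u)\big),
\]
and likewise for the primed sequence. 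Since $Z_\tau=Z'_\tau$ for $\tau=s,\dots,t-1$, the two right-hand sides differ only in their last argument, where we have $\R_{t,u}(Z_t,\dots,Z_u)\preceq \R_{t,u}(Z'_t,\dots,Z'_u)$. Monotonicity of the preference mapping $\R_{s,t}$ (coordinatewise, in particular in the last coordinate) then yields $\R_{s,u}(Z_s,\dots,Z_u)\preceq \R_{s,u}(Z'_s,\dots,Z'_u)$, which is \eqref{appr-3}.

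For the strict case, repeat the argument: if $\R_{t,u}(Z_t,\dots,Z_u)\prec \R_{t,u}(Z'_t,\dots,Z'_u)$, then the input tuples to $\R_{s,t}$ agree in the first $t-s$ coordinates and satisfy a strict inequality $\prec$ in the last coordinate (here one uses that $\prec$ for a single component is inherited as $\prec$ for the full tuple, since the other components are equal). Strict monotonicity of $\R_{s,t}$ then gives $\R_{s,u}(Z_s,\dots,Z_u)\prec \R_{s,u}(Z'_s,\dots,Z'_u)$, which is \eqref{appr-4}. Since this holds for all admissible $s<t<u$, the preference system is (strictly) dynamically consistent in the sense of Definition \ref{def-dc} (Definition \ref{def-4}).

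The only point requiring a little care — and the main obstacle, such as it is — is the bookkeeping in applying Definition \ref{def-mon} of monotonicity for $\R_{s,t}$: the comparison there is between elements of $\Z_{s,t}$, so one must observe that $(Z_s,\dots,Z_{t-1},A)\preceq(Z_s,\dots,Z_{t-1},B)$ whenever $A\preceq B$ in $\Z_t$, which is immediate since equality holds in all but the last coordinate. One should also confirm that $\R_{t,u}(Z_t,\dots,Z_u)$ indeed lies in $\Z_t$ (so that it is a legitimate last argument of $\R_{s,t}$), which is exactly the codomain assumption $\R_{t,u}\colon\Z_{t,u}\to\Z_t$ built into the definition of a preference system. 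With these observations the proof is complete.
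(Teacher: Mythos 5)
Your proof is correct and follows exactly the paper's argument: apply recursivity with splitting point $v=t$ to write $\R_{s,u}$ as $\R_{s,t}$ composed with $\R_{t,u}$ in the last coordinate, then invoke (strict) monotonicity of $\R_{s,t}$, using that the first $t-s$ coordinates coincide. The extra bookkeeping remarks you add (the componentwise comparison and the codomain of $\R_{t,u}$) are fine and do not change the argument.
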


\begin{proof}
We need to verify the implication~\eqref{appr-3}. By recursivity,
for $1\le s<t<u\le T$, we have
\begin{align*}
	\R_{s,u}(Z_s,\dots,Z_u) &=
	\R_{s,t}\big(Z_s,\dots, Z_{t-1}, \R_{t,u}(Z_{t},\dots, Z_u)\big)\text{ and}\\
	\R_{s,u}(Z'_s,\dots,Z'_u)&=
	\R_{s,t}\big(Z'_s,\dots, Z'_{t-1}, \R_{t,u}(Z'_{t},\dots, Z'_u)\big).
\end{align*}
Assuming $Z_\tau=Z'_\tau$, $\tau=s,\dots,t-1$ the implication~\eqref{appr-3} (the implication~\eqref{appr-4}) follows by monotonicity (strict monotonicity) of $\R_{s,t}$.
\end{proof}

It follows then by Proposition \ref{pr-din1}(ii) that if the system is recursive and the preference mappings are strictly monotone, then every optimal solution
of the reference problem is time consistent. As already mentioned, without \emph{strict} monotonicity the recursivity property does not necessarily imply  time consistency of \emph{every} optimal solution.
\\




In the additive case (discussed in Section~\ref{rem-addit}), when
$\R_{t,T}(Z_t,\dots,Z_T)=\rho_{t,T}(Z_{t}+\dots+Z_T)$, $t=0,\dots,T-1$, the recursive property can be written as
\begin{equation}\label{addrec}
	\rho_{t,T}(\rho_{v,T}(Z))=\rho_{t,T}(Z),\;Z\in \Z_T,\ 0\le t<v\le T-1.
\end{equation}
Note that since $\rho_{v,T}(Z)\in \Z_v$,  we have   that
$\rho_{t,T}(\rho_{v,T}(Z))=\rho_{t,v}(\rho_{v,T}(Z)).$
By applying~\eqref{addrec} recursively for $v=t+1,\dots,T-1$, this  means that $\rho_{t,T}$ can be decomposed as
\begin{equation}\label{sum-1} \rho_{t,T}(Z)= \rho_{t,T}\big(\rho_{t+1,T}\left (\cdots\rho_{T-1,T}(Z)\right )\big),\;Z\in \Z_T.
\end{equation}
If moreover $\rho_{t,T}$ is  translation equivariant, this becomes
\begin{equation}\label{sum-2}
	\rho_{t,T}(Z_t+\dots+Z_T)= Z_t+\rho_{t,T}\big(Z_{t+1}+
	\rho_{t+1,T}(Z_{t+2})+\cdots+
	\rho_{T-1,T}(Z_T)\big).
\end{equation}

For a law invariant convex coherent  measure $\varrho$ and $\rho_{t,T}:=\varrho_{|\F_t}$,  the  recursive property~\eqref{addrec} can hold only in two cases -- for the `expectation' and the $`\ess$'  operators (cf.\ \cite{Schachermayer2009a}). For example the Average Value-at-Risk preference system, $\rho_{t,T}:=\avr_{\alpha|\F_t}$, is not recursive for $\alpha\in (0,1)$. Recursive preference system, in the additive case, can be constructed in the  nested form
\begin{equation}\label{sum-3}
\rho_{t,T}(Z):= \phi_t\big(\phi_{t+1}\left (\cdots\phi_{T-1}(Z)\right )\big),\;Z\in \Z_T,
\end{equation}
where $\phi_s\colon\Z_{s+1}\to \Z_s$, $s=1,\dots,T-1$,  are one-step mappings. For example, taking $\varrho:=\avr_\alpha$ this becomes
\begin{equation*}\label{eq:nAVaR2}
	\rho_{t,T}(\cdot) =\avr_{\alpha|\F_{t}}\big (\avr_{\alpha|\F_{t+1}}(\cdots \avr_{\alpha|\F_{T-1}}(\cdot))\big),
\end{equation*}
the so-called \emph{nested Average Value-at-Risk} mappings. As it was pointed out above, for $\alpha\in (0,1)$ these nested Average Value-at-Risk  mappings  are different from the $\avr_{\alpha|\F_{t}}$.

Consider now the general case of the preference system $\{\R_{t,u}\}_{1\le t<u\le T}$. The recursive property~\eqref{eq:Rec} implies that $\R_{t,u}$ can be decomposed
in terms of one step mappings $\R_{s,s+1}$, $s=t,\dots,u-1$,  as
\begin{equation}\label{decom-1}
	\R_{t,u} (Z_t,\dots, Z_u )=
	\R_{t,t+1}\Big(Z_t, \R_{t+1,t+2}\big(Z_{t+1},\cdots, \R_{u-1,u}(Z_{u-1}, Z_u)\big)\Big).
	\end{equation}
Conversely recursive preference mappings can be constructed in the form~\eqref{decom-1} by choosing one step
 mappings $\R_{s,s+1}\colon\Z_s\times \Z_{s+1}\to\Z_s$, $s=1,\dots,T-1$.

\subsection{Dynamic programming equations}

The recursive  property~\eqref{eq:Rec} and monotonicity of the preference system allow to write the following dynamic programming equations for the reference problem~\eqref{refm-2}, derivations are similar to the two stage case discussed in Section~\ref{sec:twost}.
At the terminal stage~$T$ the cost-to-go function is defined as
\begin{equation}\label{dynmar-1}
V_T(x_{T-1},\w):=\essinf_{x_T\in \X_T(x_{T-1},\w)} f_T(x_T,\w).
\end{equation}
Suppose that $x_0,\dots,\bx_{T-1}$ are given.
Since $\R=\R_{0,T}$ we have by the interchangeability principle (Proposition~\ref{pr-inter}) and recursivity that
\begin{align}\label{eq:17}
	\inf_{\bsx_T \in \X_T(\bsx_{T-1},\cdot)}&\R\big[f_0(x_0) ,  \dots , f_T(\bx_{T}(\cdot),\cdot)\big]\\
	&= \R\big[f_0(x_0),  \dots , f_{T-1}(\bx_{T-1},\w), \inf_{\bsx_T \in \X_T(\bsx_{T-1},\w)} f_T(\bx_{T},\w)\big]\nonumber\\
	& = \R_{0,T}\big[f_0(x_0), \dots, f_{T-2}(\bx_{T-2},\w), f_{T-1}(\bx_{T-1},\w),\, V_T(x_{T-1},\w)\big]\nonumber\\
	&=\R_{0,T}
	\big[f_0(x_0), \dots,f_{T-2}(\bx_{T-2},\w), \R_{T-1,T}[f_{T-1}(\bx_{T-1},\w),\, V_T(x_{T-1},\w)]\big],\nonumber
\end{align}
assuming that $V_T(\bx_{T-1},\cdot)\in \Z_T$.
Continuing this backward in time we obtain
at  stages $t=T-1,\dots,1$, the  cost-to-go functions
\begin{equation}\label{dynmar-2}
V_t(x_{t-1},\w):= \essinf_{x_t\in \X_t(x_{t-1},\w)}\R_{t,t+1}
\big(f_t(x_t,\w),\ V_{t+1}(x_t,\w)\big),
\end{equation}
representing the corresponding dynamic programming equations. Finally, at the first stage, the problem
\begin{equation*}
	\min_{x_0\in\X_0}\R_{0,1}\big(f_0(x_0),\ V_1(x_0,\cdot)\big)
\end{equation*}
should be solved. We conclude with~\eqref{eq:17} that
\begin{equation}\label{dynmar-3}
V_0:=\min_{\pi\in \Pi}\R\big [f_0(x_0),\, f_1(\bx_1(\cdot),\cdot),\, \dots,\, f_T(\bx_T(\cdot),\cdot) \big]
\end{equation} for the recursive preference system $\R$.

In a rudimentary form such approach to writing dynamic equations with relation to time consistency was outlined in \cite{Shapiro2009}.

\begin{definition}[Dynamic programming equations]
	We say that a policy $\pi=(x_0,\bx_1,\dots,\bx_T)$  satisfies the \emph{dynamic programming equations}~\eqref{dynmar-1}, \eqref{dynmar-2} and~\eqref{dynmar-3}  if
	\begin{align}
		\bx_T(\cdot)&\in \argmin\limits_{x_T\in \X_T(\bxx_{T-1},\,\cdot)} f_T(x_T,\cdot),\label{suff0}\\
		\bx_{t}(\cdot)&\in \argmin\limits_{x_t\in \X_t(x_{t-1},\cdot)}\R_{t,t+1}
		\big(f_t(x_t,\cdot),\,V_{t+1}(x_t,\cdot)\big),
		\;t=1,\dots,T-1,\label{suff}\\
		x_0&\in \argmin\limits_{x_0\in\X_0}
		\R_{0,1} (f_0(x_0),\,V_1(x_0,\cdot) ).\label{suffT}
	\end{align}
\end{definition}
If a policy satisfies the  dynamic programming equations, then it is optimal for the reference multistage problem~\eqref{refm-2} and is time consistent. Without strict monotonicity it could happen that a policy, which is optimal for the reference  problem~\eqref{refm-2}, does not satisfy the  dynamic programming equations and is not time consistent. As it was discussed in Section~\ref{sec:twost} this could happen even in the two stage case and a finite number of scenarios.

\begin{remark}
Consider  the additive case where  $\R_{t,t+1}(Z_t,Z_{t+1})=\rho_{t,T}(Z_t+Z_{t+1})$. If moreover  mappings  $\rho_{t}$ are translation equivariant, this becomes
\begin{equation}\label{ones-0}
 \R_{t,t+1}(Z_t,Z_{t+1})=Z_t+
	\rho_{t,T}(Z_{t+1}).
\end{equation}
	Suppose further that  mappings $\rho_{t,T}$  are decomposable  via a family of
	one-step coherent    mappings $\phi_t$, as in~\eqref{sum-3}. In that case    equations~\eqref{dynmar-1}--\eqref{dynmar-3} coincide with the respective equations of the additive case (cf.\ \cite{Ruszczynski}).
\end{remark}

\begin{example}
  \label{ex-d1}
Let us define one step mappings as
\begin{equation}\label{onestep-1}
 \R_{s,s+1}(Z_s,Z_{s+1}):= Z_s\vee \varrho_{|\F_s}(Z_{s+1}),
 \quad s=0,\dots,T-1,
\end{equation}
and the corresponding preference  mappings of the form~\eqref{decom-1}, where $\varrho$ is a law invariant coherent measure. In particular for $\varrho:=\ess$ we obtain the preference system of Example~\ref{ex-2}. Here  the dynamic equations~\eqref{dynmar-2} take the form
\begin{equation}\label{dyn-stop1}
	V_t(x_{t-1},\w)= \essinf_{x_t\in \X_t(x_{t-1},\w)}
\left\{	 f_t(x_t,\w)\vee \varrho_{|\F_t}\left(V_{t+1}(x_t,\w)\right)\right\},
\end{equation}
and
the reference problem   can be viewed as minimization of the worst possible outcome over the considered period of time measured in terms of the  measure $\varrho$.

As we shall see in Section \ref{sec-mos}, this example is closely related to the stopping time risk averse formulation of multistage programs.

\end{example}

\section{Time consistent optimal stopping}
\label{sec:Stopping}

In this section we discuss a combination of the optimal stopping time and time consistent formulations of multiperiod  preference  measures.
\begin{definition}
	Let $\varrho_{t|\F_t}\colon\Z_{t+1}\to\Z_{t}$, $t=0,\dots,T-1$,
	be monotone, translation equivariant mappings and consider  the corresponding mappings $\rho_{s,t}\colon\Z_t\to\Z_s$ represented in the nested form
	\begin{equation}\label{stopp-2}
		\rho_{s,t}(\cdot):=\varrho_{s|\F_s} \big(\varrho_{s+1|\F_{s+1}}\left (\cdots\varrho_{t-1|\F_{t-1}}(\cdot)\right )\big),\;0\le s<t\le T.
	\end{equation}
	The \emph{stopping risk measure} is
	\begin{equation}\label{stopp-3}
		\rho_{0,T}(Z_\tau)= \one_{\{\tau=0\}}Z_0 + \varrho_{0|\F_0}\left(\one_{\{\tau=1\}}Z_1    +\cdots +\varrho_{T-1|\F_{T-1}}(\one_{\{\tau=T\}}Z_T)  \right).
	\end{equation}
\end{definition}
The stopping  risk measure is well-defined by virtue of~\eqref{stopp-1} and the translation equivariance of the mappings $\varrho_{t|\F_{t}}$. Since $\F_0$ is trivial, the corresponding functional $\ \rho_{0,T}\colon\Z_T\to \bbr$ is real valued.

In   the risk neutral case when  $\varrho_{t|\F_t}:=\bbe_{|\F_t}$, we have that $\rho_{s,t}=\bbe_{|\F_s}$ for $0\le s<t\le T$,
in particular   $\rho_{0,T}= \bbe_{|\F_0}=\bbe$,  hence
\begin{equation}
\label{stopp-4}
 \bbe(Z_\tau)= \bbe\left[\sum_{t=0}^T\one_{\{\tau=t\}}Z_t\right]=
 \one_{\{\tau=0\}}Z_0 + \bbe_{|\F_0}\left(\one_{\{\tau=1\}}Z_1    +\cdots +\bbe_{|\F_{T-1}}(\one_{\{\tau=T\}}Z_T)  \right).
\end{equation}
The stopping time risk measure suggests the following counterpart of the risk neutral stopping time problem~\eqref{stop-1} (recall that by `$\minmax$' we mean that  either   the minimization or maximization procedure is applied):
\begin{equation}\label{stop-risk}
\minmax_{\tau\in \cT}\rho_{0,T}(Z_\tau).
\end{equation}
As it was argued in the previous sections, the above  formulation~\eqref{stop-risk} can be viewed as time consistent and is amenable to writing the dynamic programming equations.

\subsection{Distributionally robust approach}


	It is  possible to view the  stopping time formulation~\eqref{stop-risk} from the following distributionally robust point of view.  Consider  a \emph{convex}  coherent  functional  $\varrho\colon L_p(\O,\F,P)\to\bbr$. It
 can be represented in the dual form
	\begin{equation}\label{riskdual}
		\varrho (Z)=\sup_{Q\in \cM}\bbe_Q[Z],
	\end{equation}
	where $\cM$ is a set of probability measures absolutely continuous with respect to the reference probability measure $P$ and such that the densities  $dQ/dP$, $Q\in \cM$, form a bounded convex weakly$^*$ closed set $\cA\subset L_q(\O,\F,P)$ in the dual space
	$L_q(\O,\F,P)=L_p(\O,\F,P)^*$. For the concave counterpart $-\varrho(-Z)$ of $\varrho$ (see~\eqref{concave}),  the corresponding dual representation is obtained by replacing  `sup' in~\eqref{riskdual} with `inf', that is
\[
-\varrho(-Z)=\inf_{Q\in \cM}\bbe_Q[Z].
\]

Conversely, given a set $\cM$  of probability measures absolutely continuous with respect to the reference probability measure $P$ and such that the densities  $dQ/dP$, $Q\in \cM$, form a bounded convex weakly$^*$ closed set $\cA\subset L_q(\O,\F,P)$, one can use the righthand side of~\eqref{riskdual} as a definition of the corresponding  functional $\varrho$.
The so defined   functional $\varrho$  is    convex coherent. It is  law invariant iff the set $\cA$ is law invariant in the sense that if $\zeta\in \cA$ and $\zeta'$ is a density distributionally equivalent\footnote{It is said that $\zeta$ and $\zeta'$ are distributionally equivalent if $P(\zeta\le z)=P(\zeta'\le z)$ for all $z\in \bbr$.}   to $\zeta$, then $\zeta'\in \cA$. This holds even if the reference probability measure $P$ has atoms (cf.\ \citet[Theorem~2.3]{shapiro2017}).

Since $\bbe_Q[\,\cdot\,]= \bbe_Q\big[\bbe_Q [\,\cdot\,|\F_t]\big]$, for $Q\in \cM$,  we have that
	\begin{equation}\label{recsup}
	 \sup_{Q\in \cM}\bbe_Q[\,\cdot\,]=\sup_{Q\in \cM}\bbe_Q\big[\bbe_Q [\,\cdot\,|\F_t]\big ]\le
	\sup_{Q\in \cM}\bbe_Q\big[\,\esssup_{Q\in \cM}\bbe_Q [\,\cdot\,|\F_t]\big ].
	\end{equation}
The functional  $\varrho_{|\F_t}(\cdot):=\esssup_{Q\in \cM}\bbe_Q [\,\cdot\,|\F_t]$ can be viewed as the conditional counterpart  of the corresponding functional $\varrho$.
	Equality in~\eqref{recsup}  would mean  the recursive property  $\varrho(\cdot)= \varrho(\varrho_{|\F_t}(\cdot))$.  In the law invariant case and when the reference probability measure is nonatomic,
the   functional $\varrho$ has such  recursive property
  only when the set $\cM$ is a singleton or consists of all probability measures absolutely continuous with respect to the reference measure (cf.\ \cite{Schachermayer2009a}).
	
	The nested functional  $\rho_{0,T}$, defined in~\eqref{stopp-2}, is decomposable, i.e., has the recursive property~\eqref{addrec}.
	For   not decomposable  (law invariant)  risk measure $\varrho$ the corresponding \emph{nested} stopping objective
	$\rho_{0,T}(Z_\tau)$, of the form~\eqref{stopp-3},   is different from $\varrho(Z_\tau)$. As we shall see below the \emph{nested} formulation of   stopping time is amenable  for writing dynamic programming equations,  and in the  sense of nested decomposition  is time consistent.

 \begin{remark}
\label{rem-stab}
In some recent publications it was suggested to consider the following formulation of distributionally robust (risk averse) optimal stopping time problems
\begin{equation}\label{form}
  \max_{\tau\in \cT}\left\{\varrho(Z_\tau):=\inf_{Q\in \cM} \bbe_Q[Z_\tau]\right\}.
\end{equation}
As it is pointed above,
unless the set $\cM$ is a singleton or consists of all probability measures, this is not the same as the corresponding nested formulation. In order to deal with this, and eventually to write the associated dynamic equations, it was assumed in
\citet[Section 6.5]{Follmer2004} that the set $\cM$  possesses a certain property, called stability.  By the above discussion it appears that such stability property will hold only in rather exceptional cases.
 \end{remark}

 \begin{remark}
\label{rem-max}
Consider the maximization   variant of  problem~\eqref{stop-risk}.
By the dual representation~\eqref{riskdual},  such formulation  with \emph{convex} preference functional  can be viewed as somewhat optimistic;  hoping that the uncertainty of the probability distribution, represented by the respective set $\cM$,  works  in our favor  potentially giving a larger value of the return $Z_\tau$.
From the risk averse (pessimistic)  point of view it makes more sense either  to use    the corresponding  concave counterpart $\nu_{0,T}$
(defined in~\eqref{concave}), or to work with the minimization    variant while employing   the convex preference functional.
As we shall see later these considerations raise delicate issues of preserving convexity of considered problems which is crucial for efficient numerical procedures.
 \end{remark}

\subsection{Multistage risk averse optimal stopping time}
\label{sec-mos}
By employing nested functionals\footnote{Recall that the considered nested functionals are assumed to be
monotone and  translation equivariant, while can be convex or concave.}
$\rho_{0,T}$ of the form~\eqref{stopp-2},  and by using~\eqref{stopp-1},   the corresponding      optimal stopping time counterparts of problems~\eqref{stop-2}   can be written  as
\begin{equation}
\label{stp-max}
\minmax_{\pi\in \Pi}\,\minmax_{\tau\in \cT}\rho_{0,T}\left(f_\tau(\bx_\tau,\w)\right),
\end{equation}
 with
\begin{equation}
\label{refm-Stop2}
\begin{array}{lll}
\rho_{0,T}( f_\tau(\bx_{\tau},\w))  =&  \one_{\{\tau=0\}}  f_0(x_0) +
   \varrho_{0|\F_0}\big(\one_{\{\tau=1\}}f_1(\bx_{1},\w)+ \\
  &   \cdots
 +\varrho_{T-1|\F_{T-1}}(\one_{\{\tau=T\}}
f_T(\bx_{T},\w)  \big).
\end{array}
\end{equation}


\begin{remark}
It is also possible to consider optimization   of
$\rho_{0,T}\big(f_0(x_0) +\dots+ f_\tau(\bx_{\tau},\w)\big)$ by using~\eqref{stopp-1a} rather than~\eqref{stopp-1}, i.e., by replacing the cost function  $f_\tau(\bx_{\tau},\w)$ with the cumulative cost
$f_0(x_0) +\dots+ f_\tau(\bx_{\tau},\w)$
 (for the risk neutral case and fixed stopping time $\tau$  this type of problems  were  considered recently  in  \cite{Guigues2018}).
 These formulations are equivalent and we concentrate below on the formulations~\eqref{stp-max}.
\end{remark}

The problems~\eqref{stp-max}   are  twofold, they consist  in finding simultaneously
an optimal policy $\pi^*=(x_0^*,\bx_1^*\dots,\bx^*_T)$ and   an optimal stopping time $\tau^*\in\cT$.
In the risk neutral case when $\varrho_{t|\F_{t}}=\bbe_{|\F_{t}}$, for a given (fixed) policy $\pi\in \Pi$,     these problems  become  the classical problem~\eqref{stop-1}  of stopping time for the process
\begin{equation}
\label{process}
Z_t(\w):=f_t\big(\bx_t(\omega),\w\big).
\end{equation}

  For a given stopping time $\tau\in \cT$ we can write the corresponding dynamic programming equations, of the form~\eqref{dynmar-2},  for the minimization (with respect to $\pi\in \Pi$)  problem~\eqref{stp-max} (cf.\ \cite{Ruszczynski})
	\begin{align}
		V^\tau_T(\x_{T-1},\w)&:=\essinf_{x_T\in\X_t(x_{T-1},\w)}
\one_{\{\tau=T\} } f_T(x_T,\w), \label{eq:1}\\
		V^\tau_t(x_{t-1},\w)&:=\essinf_{x_t\in\X_t(x_{t-1},\w)}
		\one_{\{\tau=t\} } f_t(x_t,\w)+ \varrho_{t|\F_t}\big(V^\tau_{t+1}(x_t,\w)\big), \label{eq:2}
	\end{align}
$ t=1,\dots,T-1$, and the  first stage problem at $t=0$ is (note that
$\varrho_{0|\F_0}=\varrho_0$)
\begin{equation*}\label{first}
	\min_{x_0\in \X_0} f_0(x_0)+\varrho_0  \big(V^\tau_{1}(x_0,\w)\big).
\end{equation*}

\subsubsection{The  min-max   problem}
\label{sec-min-max}
Let us consider the min-max  (minimization with respect to $\pi\in \Pi$~-- maximization with respect to $\tau\in \cT$)  variant of  problem~\eqref{stp-max}.
For a fixed policy $\pi=(x_0,\bx_1,\dots,\bx_T)\in \Pi$  we need to solve the optimal stopping time  problem
\begin{equation}\label{minopt}
 \max_{\tau\in \cT} \left\{\rho_{0,T}(Z_\tau)= \one_{\{\tau=0\}}Z_0+\varrho_{0|\F_0}\big(\one_{\{\tau=1\}}Z_1+
 \cdots
 +\varrho_{T-1|\F_{T-1}}(\one_{\{\tau=T\}}
Z_T) \big)\right\},
\end{equation}
with  $Z_t$ given in~\eqref{process}.    The following  is a natural extension of the classical results in the risk neutral case   to the considered risk averse setting~\eqref{minopt}, e.g.,   \citet[Section 2.2]{ShiryaevStopping}.

\begin{definition}[Snell envelope]
	Let $(Z_0,\dots,Z_T)\in \Z_{0,T}$ be a stochastic process. The Snell envelope  (associated with functional $\rho_{0,T}$)   is the stochastic process
	\begin{align}
		E_T&:= Z_T,\nonumber\\
		E_t&:=  Z_t\vee  \varrho_{t|\F_t}(E_{t+1}),\quad t=0,\dots,T-1,	\label{eq:SnellZ}
	\end{align}
	defined in backwards recursive way.
\end{definition}

For $m=0,\dots,T$,  consider $\cT_m:=\{\tau\in \cT\colon\tau\ge m\}$,
the optimization problem
\begin{equation}\label{optvalue}
\max_{\tau\in \cT_m} \rho_{0,T}(Z_\tau),
\end{equation}
and
\begin{equation}\label{optsol}
	\tau^*_m(\w):=\min\{t\colon E_t(\w)=Z_t(\w),\;m\le t\le T\},\;\w\in \O.
\end{equation}
Denote by $\cv_m$ the optimal value of the problem~\eqref{optvalue}.
Of course, for $m=0$, the problem~\eqref{optvalue} coincides with problem~\eqref{minopt} and   $\cv_0$ is the optimal value of problem~\eqref{minopt}. Note that by the recursive property~\eqref{addrec} we have that\footnote{By the definition    $\rho_{T,T}(Z_T)\equiv Z_T$.}
$\rho_{0,T}(Z_\tau)=\rho_{0,m}(\rho_{m,T}(Z_\tau))$, $m=1,\dots,T$.

The following assumption was used by several authors under different names (see, for example, \cite{Cheridito2009}, where it is called \emph{local property} and references therein):
\begin{equation}\label{local}
	\varrho_{t|\F_t}(\one_A\cdot Z)=\one_A\cdot\varrho_{t|\F_t}(Z),\quad\text{ for all }A\in\F_t,\;t=0,\dots,T-1.
\end{equation}
For   coherent  mappings $\varrho_{t|\F_t}$  it always holds   (cf.\ \citet[Theorem~6.70]{RuszczynskiShapiro2009-2}).

The following can be compared with classical results in the risk neutral case (e.g.,\ \citet[Theorem 1]{BinPes2008}).

\begin{theorem}
\label{thm:VerificationZ}
	Let $\varrho_{t|\F_t}\colon\Z_{t+1}\to\Z_{t}$, $t=0,\dots,T-1$, be  monotone   translation equivariant   mappings  possessing property~\eqref{local} and $\rho_{s,t}$, $0\le s<t\le T$,  be the corresponding nested functionals  defined in~\eqref{stopp-2}.  Then  for $(Z_0,\dots,Z_T)\in \Z_{0,T}$  the following holds:
	\begin{enumerate}[nolistsep, noitemsep]
		\item [{\rm (i)}] for $m=0,\dots,T$, and $\tau^*_m$ defined in~\eqref{optsol},
		\begin{align*}
			E_m&\succeq \rho_{m,T}(Z_\tau),\;\forall \tau\in\cT_m,\\
			E_m&= \rho_{m,T}(Z_{\tau^*_m}),
		\end{align*}
		\item [{\rm (ii)}] the stopping  time $\tau^*_m$  is optimal for the problem~\eqref{optvalue},
		\item [{\rm (iii)}] \label{enu:ii} if $\hat{\tau}_m$ is an optimal stopping time for the problem~\eqref{optvalue}, then
		$\hat{\tau}_m\succeq \tau^*_m$,
		\item [{\rm (iv)}] $\cv_m=\varrho_{0,m}(E_m)$,  $m=1,\dots,T$, and $\cv_0=E_0$.
	\end{enumerate}
\end{theorem}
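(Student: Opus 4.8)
The plan is to mirror the classical Snell-envelope argument (as in \citet[Section 2.2]{ShiryaevStopping} or \citet[Theorem~1]{BinPes2008}), carrying the conditional functionals $\varrho_{t|\F_t}$ through the recursion in place of conditional expectations. The key structural facts I would rely on are: recursivity of the nested functionals $\rho_{s,t}$ (so that $\rho_{m,T}(Z_\tau)=\rho_{m,t}(\rho_{t,T}(Z_\tau))$ for any intermediate $t$), monotonicity and translation equivariance of each $\varrho_{t|\F_t}$, and crucially the local property~\eqref{local}, which is what lets indicator functions $\one_{\{\tau=t\}}$, $\one_{\{\tau\le t\}}$, $\one_{\{\tau> t\}}$ — all $\F_t$-measurable — pass in and out of the $\varrho_{t|\F_t}$'s. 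I would prove (i) first by backward induction on $m$; then (ii) is immediate from (i) applied with $\tau=\tau^*_m$ together with translation equivariance/monotonicity to compare $\rho_{0,m}$-values; (iii) uses the strict part of the recursion on the set where $\hat\tau_m$ stops early; and (iv) follows from (i) by applying $\varrho_{0,m}$ and invoking recursivity.

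\textbf{Step 1: the inequality $E_m\succeq\rho_{m,T}(Z_\tau)$ for all $\tau\in\cT_m$.} I would argue by downward induction on $m$, but it is cleaner to fix $\tau\in\cT_m$ and induct on $t$ from $T$ down to $m$, showing $E_t\succeq \rho_{t,T}(Z_{\tau\vee t})$ on the event $\{\tau\ge t\}$, or equivalently to show $\one_{\{\tau\ge t\}}E_t\succeq \one_{\{\tau\ge t\}}\varrho_{t,T}(Z_\tau)$ using the local property. At $t=T$ this is $E_T=Z_T$. For the inductive step, split $\{\tau\ge t\}$ into $\{\tau=t\}\in\F_t$ and $\{\tau\ge t+1\}\in\F_t$. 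On $\{\tau=t\}$ we have $\rho_{t,T}(Z_\tau)=Z_t\le Z_t\vee\varrho_{t|\F_t}(E_{t+1})=E_t$. On $\{\tau\ge t+1\}$, recursivity gives $\rho_{t,T}(Z_\tau)=\varrho_{t|\F_t}\big(\rho_{t+1,T}(Z_\tau)\big)$; by the induction hypothesis $\rho_{t+1,T}(Z_\tau)\preceq E_{t+1}$ on $\{\tau\ge t+1\}$ (and the local property lets me use the $\F_t$-measurable restriction), so monotonicity of $\varrho_{t|\F_t}$ yields $\varrho_{t|\F_t}\big(\rho_{t+1,T}(Z_\tau)\big)\preceq \varrho_{t|\F_t}(E_{t+1})\preceq E_t$. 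Combining the two pieces via the indicator decomposition $\one_{\{\tau\ge t\}}=\one_{\{\tau=t\}}+\one_{\{\tau\ge t+1\}}$ and the local property finishes the step; taking $t=m$ gives the claim.

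\textbf{Step 2: equality at $\tau^*_m$, and optimality.} For $\tau^*_m$ I would show $E_t=\rho_{t,T}(Z_{\tau^*_m})$ on $\{\tau^*_m\ge t\}$, again by downward induction. The point is that on $\{\tau^*_m\ge t\}$, by definition of $\tau^*_m$ in~\eqref{optsol}, we have $E_t>Z_t$ fails to force stopping only when $E_t=Z_t$, i.e.\ on $\{\tau^*_m=t\}$ we have $E_t=Z_t=\rho_{t,T}(Z_{\tau^*_m})$, while on $\{\tau^*_m\ge t+1\}$ (an $\F_t$ event) we have $E_t=\varrho_{t|\F_t}(E_{t+1})$ exactly because $E_t\ne Z_t$ there so the max is attained at the second term; then the induction hypothesis $E_{t+1}=\rho_{t+1,T}(Z_{\tau^*_m})$ on $\{\tau^*_m\ge t+1\}$ together with recursivity closes it. Taking $t=m$ gives $E_m=\rho_{m,T}(Z_{\tau^*_m})$, and combined with Step 1 this shows $\tau^*_m$ attains the supremum in~\eqref{optvalue} after applying $\varrho_{0,m}$ (which is monotone), proving (ii); it also sets up (iv), since $\cv_m=\varrho_{0,m}(\rho_{m,T}(Z_{\tau^*_m}))=\varrho_{0,m}(E_m)$ by recursivity, and for $m=0$ the functional $\varrho_{0,0}$ is the identity so $\cv_0=E_0$.

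\textbf{Step 3: minimality of the optimal stopping time (iii), and the main obstacle.} For (iii) suppose $\hat\tau_m$ is optimal; on the event $A:=\{\hat\tau_m<\tau^*_m\}$, the definition of $\tau^*_m$ forces $E_t>Z_t$ at $t=\hat\tau_m$, i.e.\ $E_{\hat\tau_m}=\varrho_{\hat\tau_m|\F_{\hat\tau_m}}(E_{\hat\tau_m+1})>Z_{\hat\tau_m}$ on $A$, and one wants to conclude that stopping at $\hat\tau_m$ is strictly worse, contradicting optimality unless $P(A)=0$. This is the delicate step, and I expect it to be the main obstacle: in the risk-neutral case one uses strict inequality propagating through conditional expectation, but here $\varrho_{t|\F_t}$ need only be monotone (not strictly monotone — recall $\avr_\alpha$ is not strictly monotone), so a strict gap at an early time need not survive after applying the later $\varrho$'s. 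The resolution I would pursue is the same as in the classical martingale-style proof: show that the stopped process $\big(\varrho_{t\wedge\tau^*_m,T}(\cdots)\big)$ is a ``$\rho$-supermartingale'' that is actually a ``$\rho$-martingale'' up to $\tau^*_m$ and strictly decreasing at a stop before $\tau^*_m$ on $A$ — using translation equivariance to peel off $Z_{\hat\tau_m}$ and the \emph{strict} inequality $E_{\hat\tau_m}\succ Z_{\hat\tau_m}$ on $A$ directly at the moment of stopping (before any further $\varrho$ is applied) — and then argue that because $\hat\tau_m\le\tau^*_m$ pathwise fails on $A$, comparing the value of $\hat\tau_m$ with that of $\hat\tau_m\vee\tau^*_m$ (which dominates it by Step 1 applied from time $\hat\tau_m$) gives a contradiction with $\hat\tau_m$ being optimal. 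Care must be taken to phrase ``strictly worse'' in terms of the $\prec$ order and to use monotonicity of $\varrho_{0,m}$ to lift the comparison to real numbers; I would isolate the needed one-step strict-comparison fact as a small lemma about $\varrho_{t|\F_t}$ restricted to the $\F_t$-set $A$ via the local property.
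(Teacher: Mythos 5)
Your Steps 1 and 2 are correct and take essentially the same route as the paper: a backward recursion in which the decomposition $\one_{\{\tau\ge t\}}=\one_{\{\tau=t\}}+\one_{\{\tau\ge t+1\}}$, translation equivariance and the local property~\eqref{local} peel off the stopped value, and monotonicity of $\varrho_{t|\F_t}$ propagates the bound $\rho_{t,T}(Z_\tau)\preceq E_t$ together with the equality along $\tau^*_m$; the paper merely writes out the cases $m=T-1$ and $m=T-2$ explicitly instead of your single induction, and (ii), (iv) follow exactly as you say by applying the monotone outer functional and recursivity.

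The genuine gap is in Step 3, precisely where you suspected it. Your plan ultimately needs a lemma of the form: if an optimal $\hat\tau_m$ stops before $\tau^*_m$ on a set $A\in\F_{\hat\tau_m}$ of positive probability, then the strict pathwise inequality $Z_{\hat\tau_m}\prec E_{\hat\tau_m}$ on $A$ survives the application of the outer, merely monotone, mappings and forces $\rho_{0,T}(Z_{\hat\tau_m})<\cv_m$. With monotonicity alone this is not true, and comparing $\hat\tau_m$ with $\hat\tau_m\vee\tau^*_m$ yields only a weak inequality, so no contradiction with optimality is obtained. Concretely, take $T=2$, $Z_0=0$, $\F_1$ generated by $\{A,B\}$ with $P(A)=P(B)=\tfrac12$, $Z_1=1$, $Z_2=2$ on $A$, $Z_1=Z_2=10$ on $B$, $\varrho_{1|\F_1}:=\bbe_{|\F_1}$ and $\varrho_{0|\F_0}:=\AVaR_{1/2}$. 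Then $E_1=2$ on $A$, $E_1=10$ on $B$, hence $\tau^*_0=2$ on $A$, $\tau^*_0=1$ on $B$, and $\cv_0=E_0=10$; but $\hat\tau\equiv 1$ gives $\rho_{0,2}(Z_{\hat\tau})=\AVaR_{1/2}(Z_1)=10$, so $\hat\tau$ is optimal for~\eqref{optvalue} although $\hat\tau<\tau^*_0$ on $A$. Thus the ``small one-step strict-comparison lemma'' you propose to isolate cannot be derived from the stated hypotheses; assertion (iii) needs strict monotonicity of the relevant mappings (as in the risk-neutral case, where conditional expectation is strictly monotone), or must be restricted accordingly. You are in good company: the paper's own proof does not supply this step either, asserting in each conditional case of its induction that ``the assertion (iii) also holds.'' So your argument for (i), (ii), (iv) stands and matches the paper, but your Step 3, as written, does not prove (iii).
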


\begin{proof}
We use induction in $m$ going backwards in time.
Recall that $E_T=Z_T$ and hence  the assertions  follow  for $m=T$.
Now let $m=T-1$ and $\tau\in \cT_{T-1}$.
 Since $ \rho_{T-1,T} =\varrho_{T-1|\F_{T-1}}$, by using the translation equivariance and property~\eqref{local},   we can write
\begin{align*}
	\rho_{T-1,T}(Z_\tau) &= \varrho_{T-1|\F_{T-1}}\left(\one_{\{\tau=T-1\}}Z_{T-1}+\one_{\{\tau=T\}}Z_{T}\right)\\
	&=\one_{\{\tau=T-1\}}Z_{T-1}+\one_{\{\tau=T\}}
	\varrho_{T-1|\F_{T-1}}(Z_{T}).
\end{align*}
 We have that $\O$ is the union of the disjoint sets $\O^\tau_{T-1}$ and $ \O^\tau_T$ (defined in~\eqref{setomega}), and hence (recall that $E_T=Z_T$)
 \begin{equation}
 \label{eqder-2}
 \one_{\{\tau=T-1\}}Z_{T-1}+\one_{\{\tau=T\}}\varrho_{T-1|\F_{T-1}}(Z_{T})
 \preceq   \max\{Z_{T-1}, \varrho_{T-1|\F_{T-1}}(E_T)\}= E_{T-1}.
 \end{equation}
It follows that
$\rho_{T-1,T}(Z_\tau) \preceq   E_{T-1}.$

Conditional on the event $\{Z_{T-1} \ge \varrho_{T-1|\F_{T-1}}(E_T)\}$
we have:
 $E_{T-1} =Z_{T-1} $ and $\tau^*_{T-1} =T-1$, and
 \[
 \rho_{T-1,T}(Z_{\tau^*_{T-1}})=\varrho_{T-1|\F_{T-1}}(Z_{T-1})=Z_{T-1}=E_{T-1},
 \]
and $\tau^*_{T-1}$ is optimal for the corresponding problem~\eqref{optvalue}.
 Otherwise conditional on  the event $\{Z_{T-1} < \varrho_{T-1|\F_{T-1}}(Z_T)\}$, we have that
$E_{T-1}=\varrho_{T-1|\F_{T-1}}(Z_T)$, and $\tau^*_{T-1}=T$ is optimal for   the corresponding problem~\eqref{optvalue}. In both cases the assertion (iii) also holds.

Now let  $m=T-2$ and
$\tau\in \cT_{T-2}$.   We have that $\rho_{T-2,T}(\cdot)=\varrho_{T-2|\F_{T-2}}\big(\varrho_{T-1|\F_{T-1}}(\cdot)\big)$ and
\begin{align}\nonumber
	\rho_{T-2,T}(Z_\tau)&= \varrho_{T-2|\F_{T-2}}\left(\varrho_{T-1|\F_{T-1}}(\one_{\{\tau=T-2\}}Z_{T-2}+
	\one_{\{\tau\ge T-1\}}Z_{\tau})\right)\\
\nonumber
	&=  \one_{\{\tau=T-2\}}Z_{T-2}+\varrho_{T-2|\F_{T-2}}(\one_{\{\tau=T-1\}}Z_{T-1}+
	\one_{\{\tau=T\}}\varrho_{T-1|\F_{T-1}}(Z_{T}))\\
\nonumber
	&=  \one_{\{\tau=T-2\}}Z_{T-2}+\one_{\{\tau>T-2\}}\varrho_{T-2|\F_{T-2}}
	(\one_{\{\tau=T-1\}}Z_{T-1}+
	\one_{\{\tau=T\}}\varrho_{T-1|\F_{T-1}}(Z_{T})),
\end{align}
where the last equation holds  since
$\one_{\{\tau>T-2\}}\one_{\{\tau=T-1\}}=\one_{\{\tau=T-1\}}$ and
$\one_{\{\tau>T-2\}}\one_{\{\tau=T\}}=\one_{\{\tau=T\}}$ and by~\eqref{local}.
Then  by~\eqref{eqder-2} and monotonicity of  $\varrho_{T-2|\F_{T-2}}$ we obtain
\begin{align*}
	\rho_{T-2,T}(Z_\tau)&
	\preceq \one_{\{\tau=T-2\}}Z_{T-2}+\one_{\{\tau>T-2\}}
	\varrho_{T-2|\F_{T-2}}(E_{T-1})\\
	& \preceq  \max\{Z_{T-2}, \varrho_{T-2|\F_{T-2}}(E_{T-1})\}
	=  E_{T-2}.
\end{align*}

Conditional on   $\{Z_{T-2} \ge \varrho_{T-2|\F_{T-2}}(E_{T-1})\}$,
  we have that $E_{T-2} =Z_{T-2} $ and $\tau^*_{T-2}  =T-2$, and
 \[
 \rho_{T-2,T}(Z_{\tau^*_{T-2}})=\varrho_{T-2|\F_{T-2}}
 (\varrho_{T-1|\F_{T-1}}(Z_{T-2}))=Z_{T-2}=E_{T-2},
 \]
and $\tau^*_{T-2}$ is optimal for the corresponding problem~\eqref{optvalue}.
Otherwise conditional on the event   $\{Z_{T-2} < \varrho_{T-2|\F_{T-2}}(E_{T-1})\}$, we have    $E_{T-2} =\varrho_{T-2|\F_{T-2}}(E_{T-1})$ and $\tau^*_{T-2}\ge T-1$.
Conditioning further on $\{Z_{T-1} < \varrho_{T-1|\F_{T-1}}(E_T)\}$ we have that $E_{T-1}=\varrho_{T-1|\F_{T-1}}(Z_T)$, and $\tau^*_{T-2}=T$ is optimal for   the corresponding problem~\eqref{optvalue}. Otherwise conditional further on
$\{Z_{T-1} \ge \varrho_{T-1|\F_{T-1}}(E_T)\}$ we have that $\tau^*_{T-2}=T-1$ and the assertions are verified.

The assertion follows by going backwards in time for $m=T-3,\dots$.
\end{proof}

Note that it follows by~\eqref{eq:SnellZ} that  $Z_t(\w)\le E_t(\w)$  for all $t=0,\dots,T$ and a.e. $\w\in \O$.
We have that
\begin{equation}\label{optstop}
 \tau^*_0(\w)=\min\{t\colon Z_t(\w)\ge E_t(\w), \;t=0,\dots,T\}
\end{equation}
 is an optimal solution of problem~\eqref{minopt},  and the optimal value of problem~\eqref{minopt} is equal to $E_0$.  That is, going forward the optimal stopping time $\tau^*_0$ stops at the first time  $Z_t=E_t$.
In particular it stops
at $t=0$ if $Z_0\ge  E_0$ (recall that $Z_0$ and $E_0$ are deterministic), i.e., iff  $Z_0\ge \varrho_{0|\F_0}(E_1)$; and it stops at $t=T$ iff $Z_t< E_t$  for $t=0,\dots,T-1$.
As in the risk neutral case the time consistency (Bellman's principle) is ensured  here by the decomposable structure of the considered nested risk measure. That is, if it  was not optimal to stop within the time set
$\{0,\dots,m-1\}$, then starting the observation at time $t=m$  and being based on the information
$\F_m$ (i.e., conditional on $\F_m$), the same stopping rule is still optimal for the problem~\eqref{optvalue}.

Consider  (compare with~\eqref{onestep-1})
\begin{equation}\label{onestep-rec}
 \R_{s,s+1}(Z_s,Z_{s+1}):= Z_s\vee\varrho_{s|\F_s}(Z_{s+1}),
 \quad s=0,\dots,T-1.
\end{equation}
Then we can write $E_t$ in the following recursive  form (compare with~\eqref{decom-1})
\begin{equation*}\label{sne-3}
	E_t=\R_{t,t+1}\Big(Z_t, \R_{t+1,t+2}\big(Z_{t+1},\cdots, \R_{T-1,T}(Z_{T-1}, Z_T)\big)\Big),\quad t=0,\dots T-1.
\end{equation*}
Consequently (recall that $\cv_0=E_0$) problem~\eqref{stp-max}
can be written in the form~\eqref{refm-2} with   $\R:=\R_{0,T}$,  where $\R_{0,T}$  is  given in the nested form discussed in Section \ref{sec:decomp}, $\R_{s,s+1}$   defined in~\eqref{onestep-rec}, and with
the respective  dynamic programming equations  of  the   form~\eqref{dyn-stop1}. For an optimal policy $\bar{\pi}=\{\bar{x}_0,\bar{\bx}_1,\dots,\bar{\bx}_{T}\} $,  the  Snell envelop of the corresponding stopping time problem is  $E_t(\w)= V_t(\bar{\bx}_{t-1},\w)$, $t=1,\dots,T$, where $V_t(\cdot,\w)$ is the value function defined by the dynamic equations~\eqref{dyn-stop1}.

\begin{remark}
\label{rem-conv}
As it was  already  mentioned in Remark \ref{rem-max},  there is a delicate issue of preserving convexity of the considered optimization problems. Suppose that the functionals $\varrho_{t|\F_t}$  are convex.  Together with the assumed monotonicity of $\varrho_{t|\F_t}$ and
 since maximum of two convex functions is convex,   convexity of the respective  value functions $V_t(\cdot,\w)$ is implied   by convexity of the objective functions $f_t(\cdot,\w)$  if, for example,  the feasibility constraints are linear of the form
\begin{equation}\label{lincon}
 \X_t(x_{t-1},\w):=\{x_t\ge 0\colon B_t(\w)x_{t-1}+A_t(\w)x_t=b_t(\w)\}.
\end{equation}
On the other hand if $\varrho_{t|\F_t}$  are concave, then convexity of
$V_t(\cdot,\w)$ is not guaranteed.
\end{remark}

\subsubsection{The min-min   problem}
\label{sec-min-min}
Consider the min-min  (minimization with respect to $\pi\in \Pi$ and  $\tau\in \cT$)  variant of  problem~\eqref{stp-max}. In that case for a   fixed policy $\pi\in \Pi$  we need to solve the optimal stopping time  problem
\begin{equation}\label{minopt-2}
 \min_{\tau\in \cT} \rho_{0,T}(Z_\tau).
\end{equation}
Here the    one step mappings $\R_{s,s+1}$,   used in  construction of the corresponding preference system (discussed in Section \ref{sec:decomp}), take the form
\begin{equation}\label{minr}
  \R_{s,s+1}(Z_s,Z_{s+1}):= Z_s\wedge  \varrho_{s|\F_s}(Z_{s+1}),\;
  s=0,\dots T-1.
\end{equation}
In contrast to~\eqref{onestep-rec} considered above, the mappings
$\R_{s,s+1}$ defined in~\eqref{minr} do not preserve convexity of $\varrho_{s|\F_s}$. The corresponding  dynamic programming equations~\eqref{dynmar-2}--\eqref{dynmar-3} (and
\eqref{suff0}--\eqref{suffT})  apply here as well and
are somewhat   simpler  as the essential infimum and the minimum can be interchanged.

\subsubsection{Supermartingales and delayed stopping}
Recall that a sequence of random variables   $\{X_t\}_{0\le t\le T}$ is said to be  \emph{supermartingale} relative to the filtration $\cF$,  if  $X_t\succeq \bbe_{|\F_t}(X_{t+1})$,   $t=0,\dots,T-1$.
By analogy we say that the sequence $X_t\in \Z_t$ is $\cP$-supermartingale, with respect to the collection of  mappings $\cP=\{  \varrho_{t|\F_t}\}_{t=0,\dots,T-1}$,  if
\begin{equation}\label{mart-1}
	X_t\succeq   \varrho_{t|\F_t}(X_{t+1}),\;t=0,\dots,T-1.
\end{equation}
It follows by the definition~\eqref{eq:SnellZ} that the  Snell envelope sequence $\{E_t\}_{0\le t\le T}$ is $\cP$-super\-martingale. It also follows from
\eqref{eq:SnellZ} that $E_t\succeq Z_t$, i.e., $E_t$ dominates $Z_t$.
We have that $\{E_t\}_{0\le t\le T}$ is the smallest  $\cP$-supermartingale which dominates the corresponding sequence $\{Z_t\}_{0\le t\le T}$.

\begin{remark}
\label{rem-order}
	Consider two collections $\varrho_{t|\F_t}\colon\Z_{t+1}\to\Z_{t}$ and $\varrho'_{t|\F_t}\colon\Z_{t+1}\to\Z_{t}$,  $t=0,\dots,T-1$, of  monotone   translation equivariant   mappings  possessing property~\eqref{local}, with the respective Snell envelope sequences $E_t$ and $E'_t$
	and stopping times $\tau'_0=\inf\{t\colon E'_t=Z_t\}$  and  $\tau^*_0=\inf\{t\colon E_t=Z_t\}$, defined  for $(Z_0,\dots,Z_T)\in \Z_{0,T}$.
	Suppose that $\varrho_{t|\F_t}(\cdot)\preceq \varrho'_{t|\F_t}(\cdot)$, $t=0,\dots,T-1$.
	It follows then that $ E_t\preceq E^\prime_t$ for $t=0,\dots,T$, and hence  $\tau^*_0\le\tau^\prime_0$. That is, for larger risk mappings the optimal (maximization)  stopping time, defined in~\eqref{optsol},   is delayed.

	For convex  law invariant risk functionals it holds that $\bbe [X]\le \rho(X)$ (e.g., \citet[Corollary~6.52]{RuszczynskiShapiro2009-2}). In that case it follows together with~\eqref{mart-1} that every $\cP$-supermartingale $X_t$, $t=0,\dots,T-1$,  is also a martingale in the usual sense, i.e., with respect to the expectation.
For the concave counterpart $\nu(X)=-\rho(-X)$ the converse inequality $\nu(X)\le \bbe [X]$
follows of course.
\end{remark}

\section{Numerical illustration}\label{sec:numeric}
This section discusses computational approaches to solving   stopping time problems with preference systems of the general form~\eqref{stp-max}.
We illustrate two different approaches based on the pricing of American put options. This stopping time problem is well-known in mathematical finance.

We start by solving the dynamical equations explicitly in the following section and then elaborate on Stochastic Dual Dynamic Programming (SDDP) type algorithm  in Section~\ref{sec:SDDP}. We give numerical examples for optimal stopping in univariate, as well as for multivariate problems (basket options). Further, the examples cover both, the convex and the concave nested (stopping) preference systems.

\subsection{Optimal stopping by solving the dynamic equations}\label{sec:pric}
Like a simple stock, the option is an investment which comes with risk. The risk averse investor is not willing to pay the fair price for risk-prone investments, instead, the potential buyer expects a discount corresponding to his general decline of risk.
As the American put option with strike price $K>0$ can be exercised any time the investor considers the optimal stopping problem
\begin{equation}\label{pr-1}
	\sup_{\tau \in \cT}\rho_{0,T}\big(e^{-r\,\tau}\cdot[K-S_\tau]_+ \big),
\end{equation}
where $r>0$ is a fixed discount rate and $S_t$ is the price of the underlying at time $t$.

The risk neutral investor chooses the expectation, $\rho_{0,T}=\mathbb E$, in~\eqref{pr-1}, this problem is the well-known optimal stopping problem in mathematical finance.
The risk averse investor, in contrast, chooses a preference measure $\rho_{0,T}$ reflecting his personal risk profile.

Note that the situation reverses for the \emph{owner} of a risky asset. This market participant is willing to accept a fee (a commission, like a premium in insurance) to get rid of his risky asset, i.e., to sell the risk.
As well, he will choose a preference functional when computing his personal price of the option, but his personal preference functional reveals opposite preferences than the buyer's functional.

Following the Black-Scholes scheme we consider the geometric random walk process
\begin{equation}\label{pr-2}
	S_t=S_{t-1}\cdot\exp\big(r-\sigma^2/2+\e_t\big), \;t=1,\dots,T,
\end{equation}
in discrete time with 
$\e_t$ being an i.i.d.\ Gaussian white noise process, $\e_t\sim\mathcal N(0,\sigma^2)$.

By considering $S_t$ as the state variable the corresponding dynamic equations   can be written recursively as
\begin{equation}\label{pr-3}
	\V_T(S_T):=[K-S_T]_+
\end{equation}
and
\begin{equation}\label{pr-4}
	\V_t(S_t)=   [K-S_t]_+\vee e^{-r}\cdot\varrho_{t|S_t}\big(\V_{t+1}(S_{t+1})\big),
\end{equation}
for $t=T-1,\dots,1$, where $\varrho$ is a chosen law invariant  coherent  risk measure. In particular we can use the Entropic Value-at-Risk, $\varrho:= \evar^\beta$, where for $\beta>0$,
\begin{equation}\label{pr-6}
	\evar^\beta(Z):=\inf_{u>0}\left\{u^{-1}\left(\beta+\log \bbe[e^{uZ}]\right)\right\}.
\end{equation}
Note that this is a \emph{convex} law invariant  coherent  risk measure, it is the homogeneous version of the risk measure studied in \citet{Schachermayer2009a}. We refer to \cite{AhmadiPichler} for details on this particular risk measure, which allows explicit evaluations for Gaussian random variables.

\begin{figure}
	\subfloat[Regions for the risk averse option holder]
	{\includegraphics[width=0.5\textwidth]{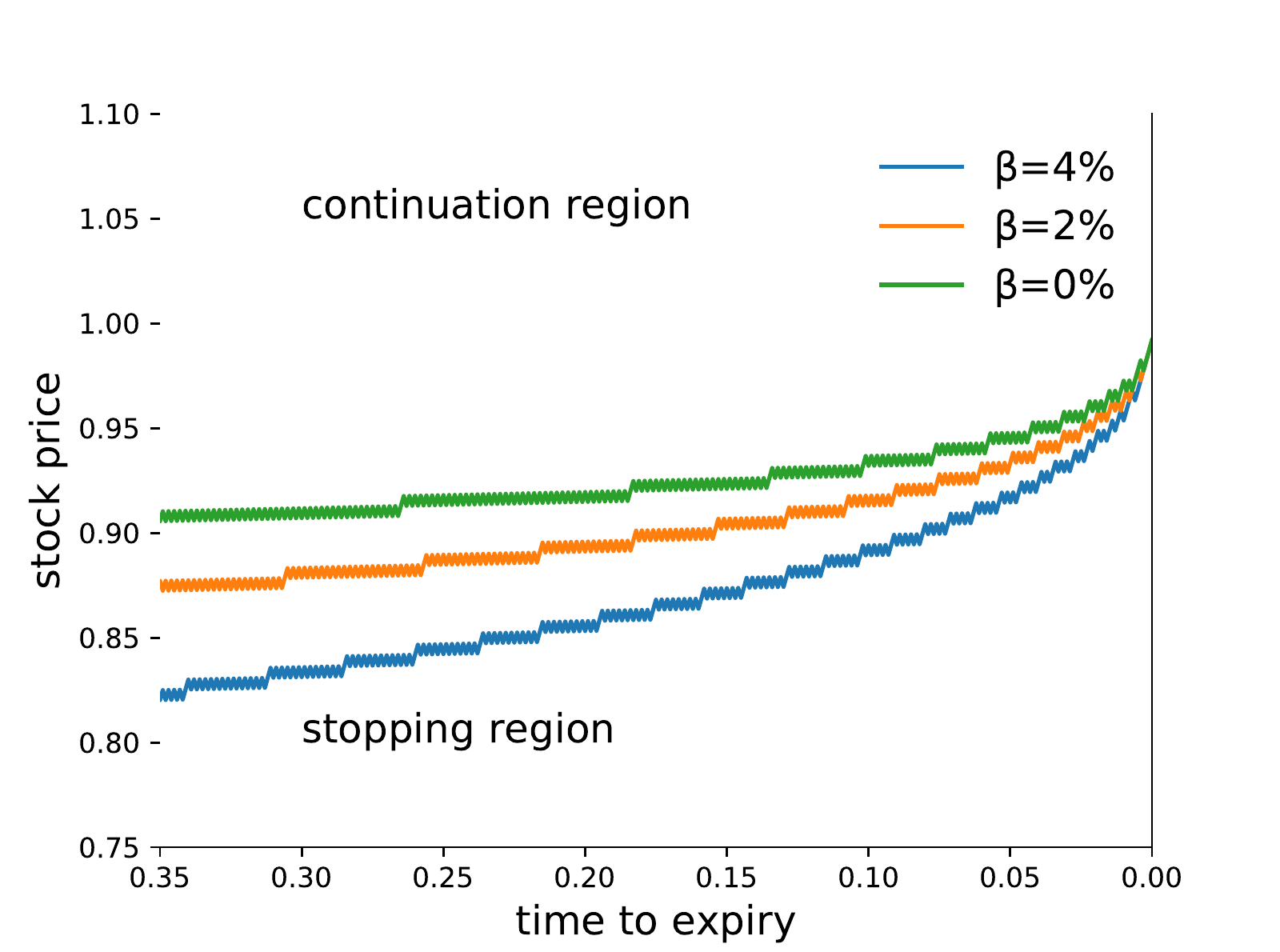}
		\label{fig:11}}
	\hfill
	\subfloat[Regions for the risk averse option buyer]
	{\includegraphics[width=0.5\textwidth]{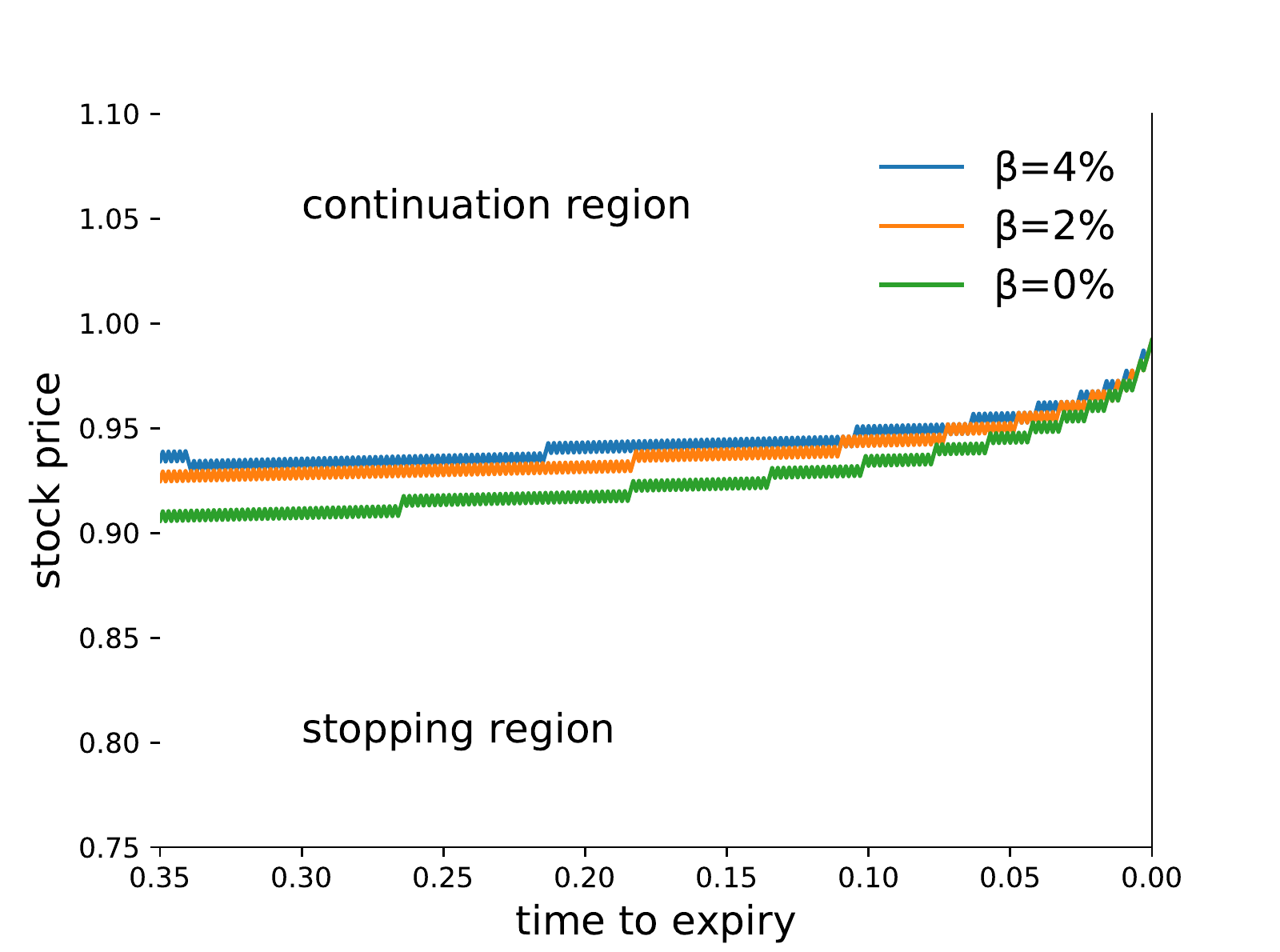}
		\label{fig:12}}
	\caption{\label{fig:1}Stopping and continuation regions for risk averse option traders}
	\label{fig:Prices}
\end{figure}
Figure~\ref{fig:1} displays the decision rules for $\varrho=\EVaR^\beta$ and varying levels $\beta$ of risk aversion. The option is not exercised, as long as the price of the stock stays in the continuation region. Once the price of the stock drops out of the continuation region into the stopping region, then it is optimal to exercise the put option. The optimal stopping time thus is \[\tau=\inf\big\{i\in\{0,\dots,n\}\colon (t_i,S_{t_i})\not\in \text{ continuation region}\big\}.\]
In view of~\eqref{pr-4} the stopping rule can be restated as
\begin{equation}\label{eq:7}
	S_t + \V_t(S_t)\le K,
\end{equation} where $S_t$ is the current price of the stock and $\V_t(S_t)$ the actual and updated price of the corresponding put option. The rule~\eqref{eq:7} is known as \emph{fugit} in mathematical finance.

It holds as well that $-\varrho(-Y)\le \varrho(Y)$\footnote{Indeed, $0=\varrho(0)\le\varrho(Y)+\varrho(-Y)$.} and Figure~\ref{fig:12} displays the continuation and stopping region for the respective  concave functional $\nu(Y):=-\EVaR^\beta(-Y)$, the solution of the problem
\[\sup_{\tau\in\cT} \nu_{0,T}\big(e^{-r\,\tau}\cdot[K-S_\tau]_+\big);\]
this plot describes the trading regions for the risk averse option buyer. In the risk neutral case, $\beta=0$, the regions are notably identical as the expectation is linear and thus $\mathbb E [Y]=-\mathbb E[-Y]$.


\subsection{Stochastic Dual Dynamic Programming}\label{sec:SDDP}
The  SDDP algorithm was introduced in \cite{per1991} and extended to a risk averse setting in \cite{shap2011a}. For a discussion of the SDDP method we can refer to \citet[Section~5.10] {RuszczynskiShapiro2009-2} and references therein.

\subsubsection{Univariate SDDP}
To study the performance of the SDDP approach to solving  optimal stopping problems,   we remove nonlinear influences and consider  the arithmetic version of the reference problem~\eqref{pr-1}, which is
\[ \sup_{\tau \in \cT}\rho_{0,T}\big([K-S_\tau]_+ -r\tau\big)
\]
(\cite{SchachermayerBachelier} discuss the differences of these models).
We use risk measure  $\varrho := (1 - \lambda) \mathbb{E} + \lambda \avr_\alpha$ for some $\lambda\in [0, 1]$ and $\alpha\in (0, 1)$ and consider  $T = 25$ stages.
 The SDDP algorithm first discretizes the random variables $\e_t$. If $\e_t$ takes the values $\pm \sigma\,S_0$ with probability~$\nicefrac12$ each, then the dynamic programming equations reduce to the binomial option pricing model. Nevertheless, the SDDP algorithm can handle the general situation where $\e_t$ follows an arbitrary distribution, as long as it is possible to obtain samples from such distribution. In the following experiments we let $\e_t\sim \N(0, \sigma^2S_0^2)$.
In order to solve the problem numerically we discretize the (continuous) distribution by randomly generating $N$ realizations at every stage. We refer to the obtained approximation as the \emph{discretized problem}.
The following numerical experiments use $N=100$  discretized points for each time period.

We assess the quality of the SDDP algorithm on two aspects, namely the statistical properties of the approximations of dynamic programming equations formed under discretization, and the efficiency of the algorithm in solving the discretized problem.
Since the problem is univariate (i.e., a single stock price $S_t$), it is possible to solve the discretized   problem quite accurately simply by discretization of the state variables.  So the main purpose of the following exercise is to verify efficiency of  the SDDP algorithm. After that we investigate a multivariate setting where simple discretization of state variables is not possible.
We first run the algorithm, applied to the discretized problem,  for 1000 iterations and record the lower bounds generated in each iteration.

The   upper bound of the optimal value is constructed by piecewise linear approximations of the value functions $V_t(\cdot)$ (this
could be inefficient in   the multivariate setting).
Note that the SDDP algorithm, applied to the discretized problem, also generates a policy for the original problem with continuous distributions of $\e_t$.  Its value for the original problem can be estimated by generating random sample paths (scenarios)  from the original distributions and averaging the obtained values.
We sample 2000 scenarios from the distributions of $\e_t$ (either the original or the discretized one) and plot the distributions of the corresponding stopping time and the interest discounted profit. To assess how well the discretization  approximates the true dynamic programming equations, we run the SDDP algorithm 30 times and compare the distributions of stopping time and interest discounted profit.

\paragraph{Formulation 1: $\lambda = 0$ (the risk neutral case).}
Figure \ref{fig1} exhibits a typical convergence of lower bounds generated by the SDDP algorithm. In particular, the gap between the upper and the lower bounds is already small after, say, 500 iterations. The algorithm solves the discretized problem  quite well.
Moreover values of the constructed policies for the discretized and original problems are very similar.


\begin{figure}[H]
	\centering
	\includegraphics[scale=0.50]{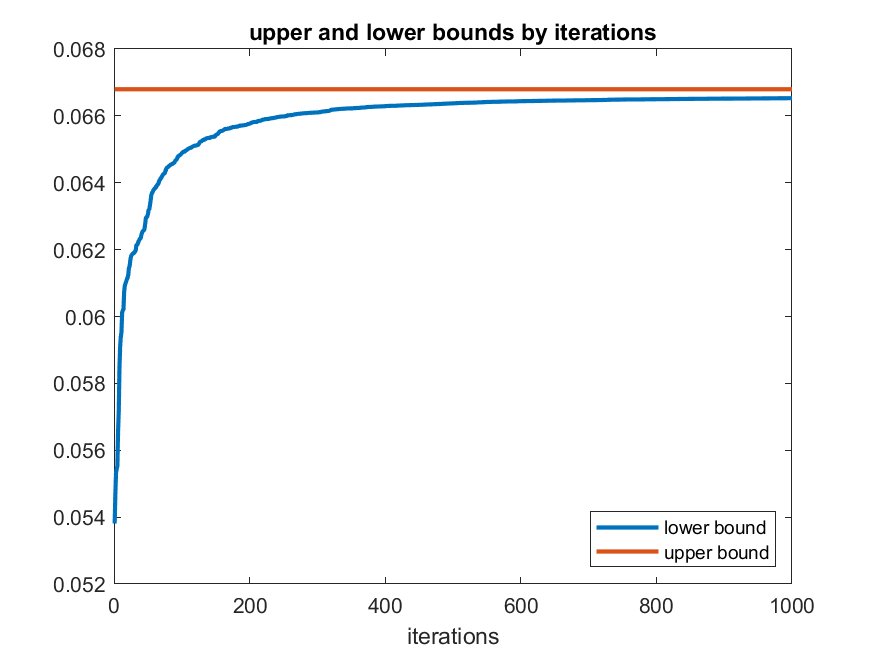}
	\caption{Upper and lower bounds of the optimal objective values (of formulation 1) generated by the SDDP algorithm in 1000 iterations.}
	\label{fig1}
\end{figure}

We run 30 trials of the SDDP algorithm for randomly generated discretizations of the original continuous distributions of $\e_t$.
We sample 2000 scenarios from both the true and the discretized distributions of $\e_t$ and plot the distributions of the corresponding stopping time and the interest discounted profit. We refer to the discretized distribution as the empirical distribution.

Figures~\ref{fig2} and~\ref{fig3} contain plots for typical distributions of interest discounted profits and stopping times generated by the discretized problems  in 30 trials, respectively. In particular, the subplot on the left of each plot corresponds to the scenarios sampled from the discretized distribution of $\e_t$, whereas the subplot on the right corresponds to the original normal distribution.
Although figure \ref{fig3} indicates that  the distributions of the stopping times generated
in different trials could  be different,
the distribution of the profits share a common shape as shown in Figure \ref{fig2}. This indicates that  the optimal objective value of \eqref{pr-1} generated by the discretization  is reasonably accurate when $N = 100$, while optimal stopping times are unstable.

\begin{figure}[H]
	\centering
	\includegraphics[scale=0.55]{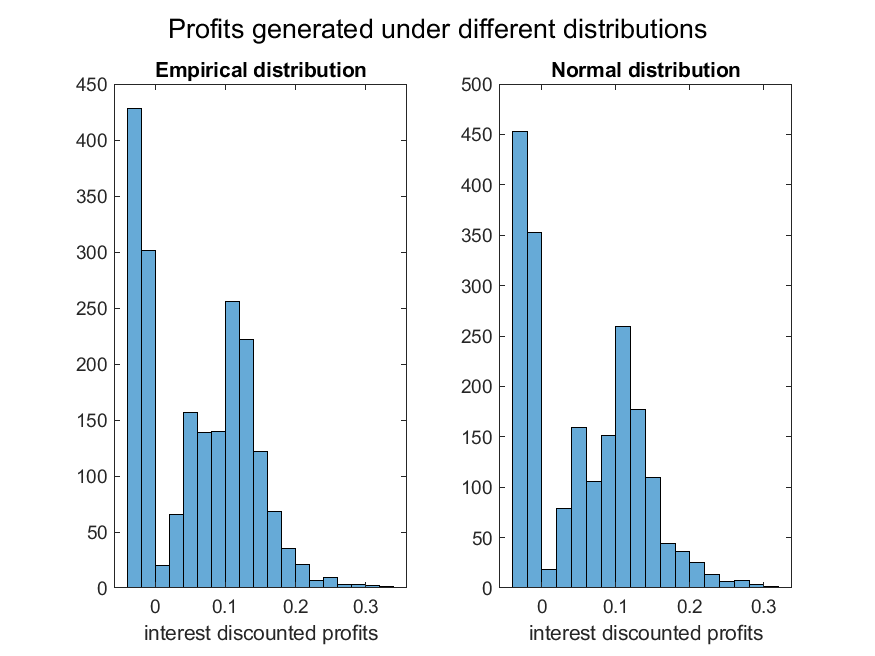}
	\includegraphics[scale=0.55]{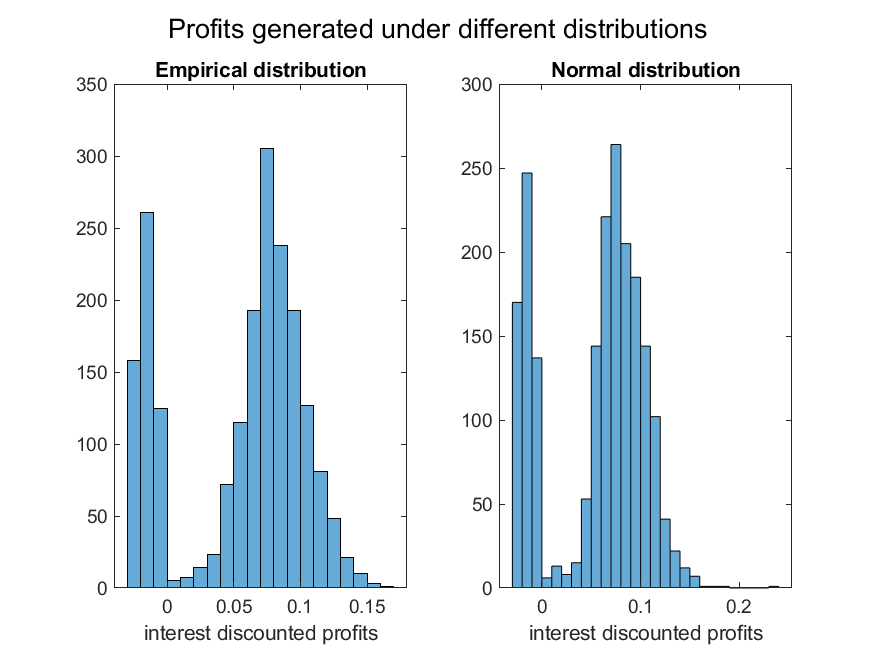}
	\caption{Typical distributions of interest discounted profit (of formulation 1) where the scenarios are sampled from both the empirical and the original normal distribution.}
	\label{fig2}
\end{figure}

\begin{figure}[H]
	\centering
	\includegraphics[scale=0.55]{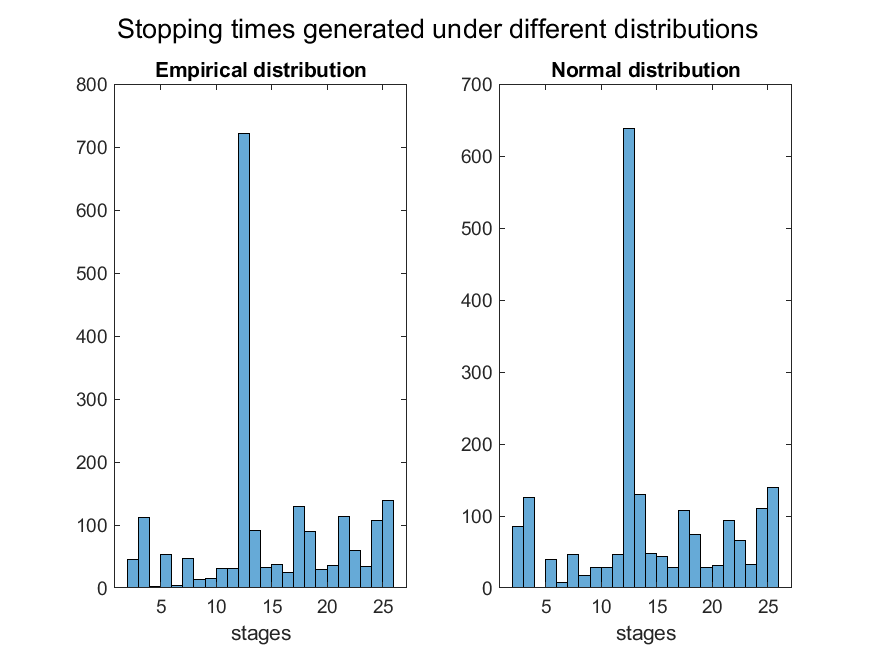}
	\includegraphics[scale=0.55]{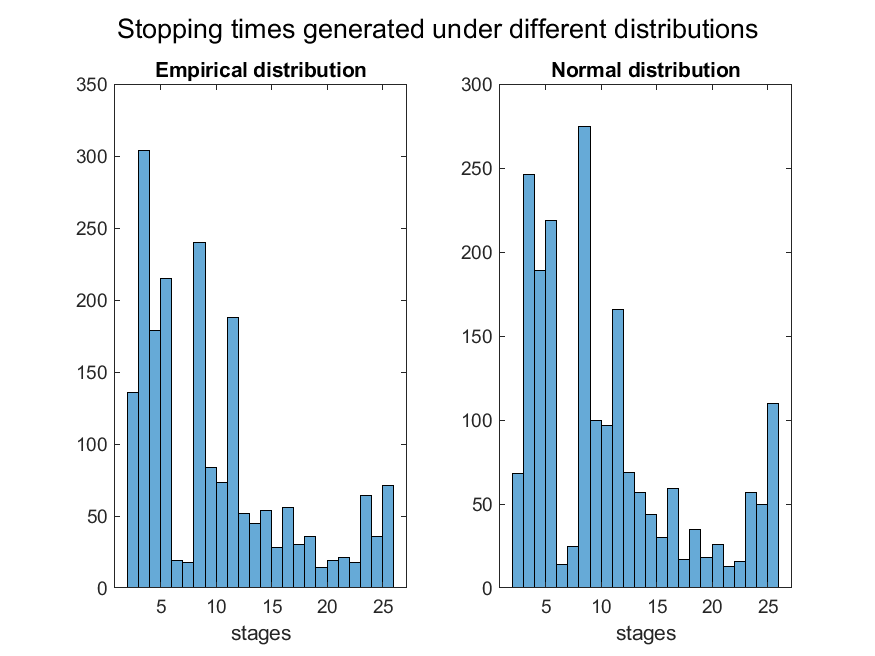}
	\caption{Typical distributions of stopping time (of formulation 1) where the scenarios are sampled from both the empirical and the original normal distribution.}
	\label{fig3}
\end{figure}


\paragraph{Formulation~2: $\lambda = 0.2$ and $\alpha = 0.05$.}
We present similar analysis  as in the risk neutral case. Figure~\ref{fig5} shows the gap between the upper and the lower bounds which is already small after 500 iterations. Moreover, all distributions of stopping times and profits have the same shape (see Figure~\ref{fig6}). As predicted in Remark~\ref{rem-max}, such  formulation hopes that  the uncertainty of the probability distribution works in our favor potentially giving a larger value of the return, thereby delays the stopping time to late stages.

\begin{figure}[H]
	\centering
	\includegraphics[scale=0.50]{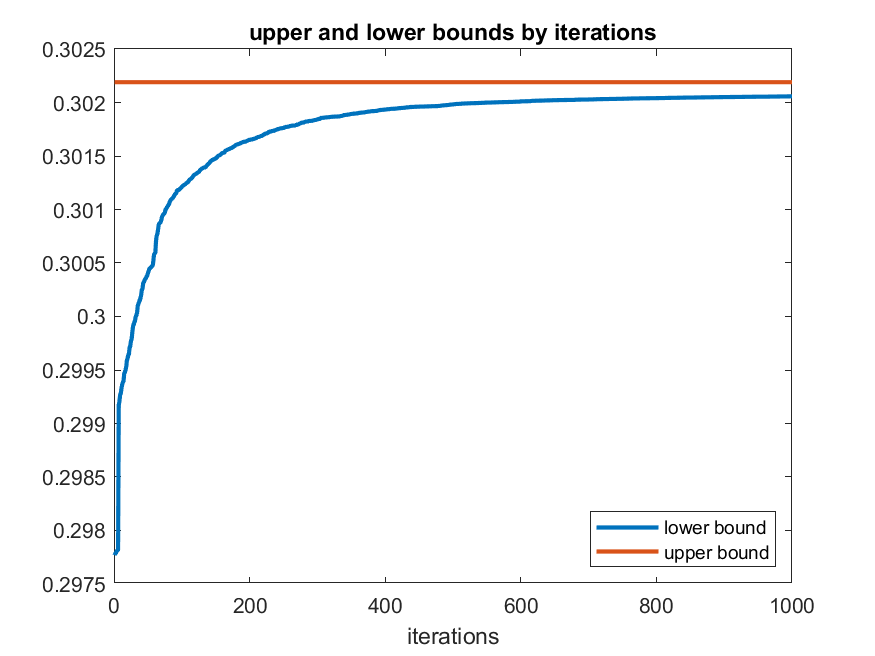}
	\caption{Upper and lower bounds of the optimal objective values (of formulation 2) generated by the SDDP algorithm in 1000 iterations.}
	\label{fig5}
\end{figure}

\begin{figure}[H]
	\centering
	\includegraphics[scale=0.55]{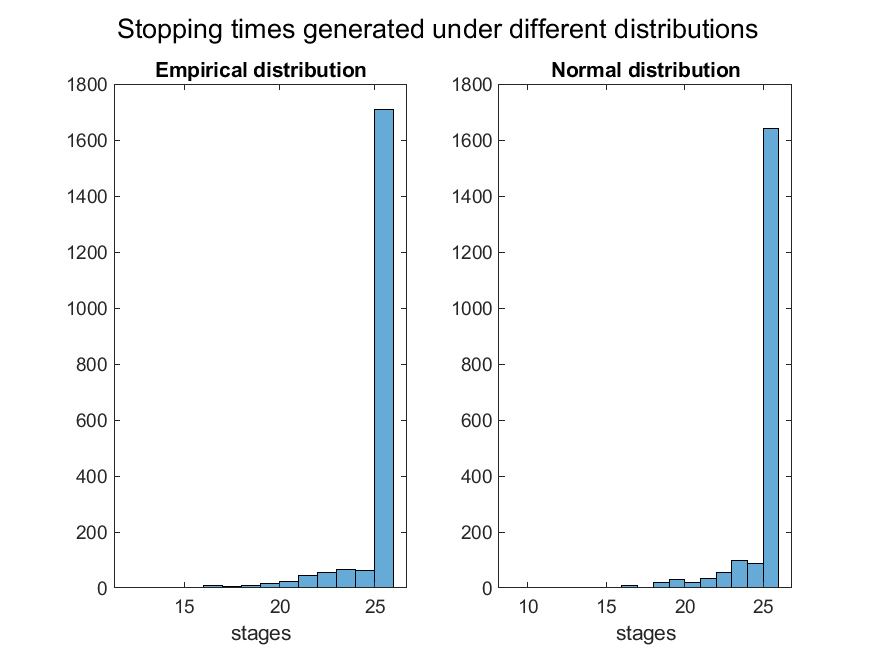}
	\includegraphics[scale=0.55]{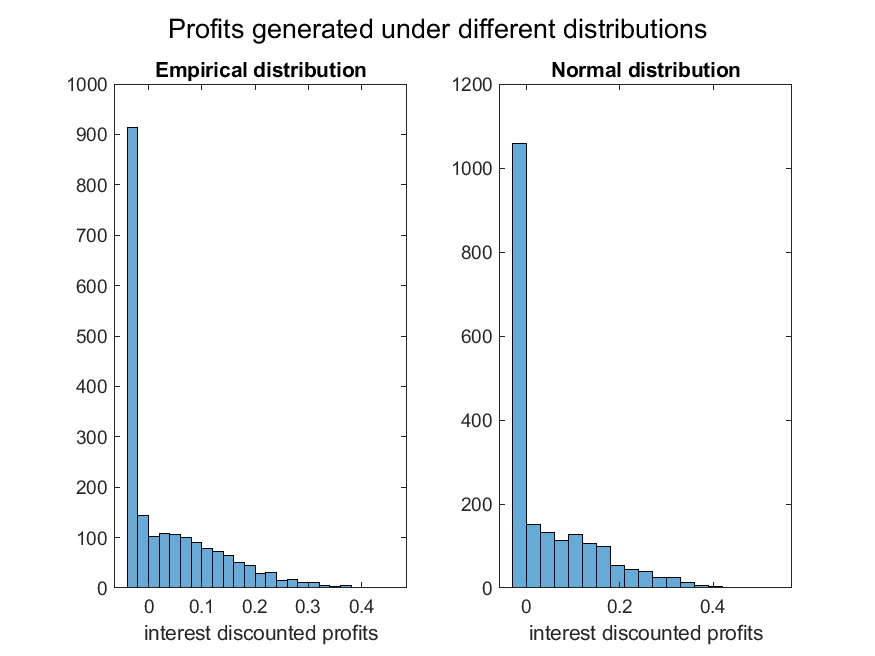}
	\caption{Typical distributions of stopping time and interest discounted profit (of formulation 2) where the scenarios are sampled from both the empirical and the original normal distribution.}
	\label{fig6}
\end{figure}


\subsubsection{Basket options}
We also analyze the SDDP algorithm applied to the  (multivariate)
American basket option pricing problem
\begin{equation} \label{bop}
\begin{array}{ll}
\sup\limits_{\tau \in \cT}\rho_{0,T}\left(\left[\sum_{j\in J} w_j S^j_\tau - K\right]_+ -r\tau\right).
\end{array}
\end{equation}
Here $J$ is an index set of assets with $|J|=5$,   and $T = 25$. Also
$\varrho = (1 - \lambda) \mathbb{E} + \lambda \avr_\alpha$ for some $\lambda\in [0, 1]$ and $\alpha\in (0, 1)$,   $w_j\in \mathbb{R}$ are weights, $K$ is the strike price,  $S^j_t$ is the price of asset $j$ at time $t$.  In particular, suppose $\mu^j = rS^j_0$ and iid $\epsilon_t\sim N(\mathbf{0}, \Sigma)$ for some covariance matrix $\Sigma$ of order $|J|\times |J|$, then $S^j_t$ is a random walk process for each $j\in J$ such that
\begin{equation} \label{udr}
S^j_t = \mu^j + S^j_{t-1} + (\epsilon_t)_j, \;t = 1, \ldots, T.
\end{equation}
The SDDP algorithm for \eqref{bop} proceeds the same way as the one for \eqref{pr-1}, and our analysis of the algorithm is the same as before, i.e., we assess how well the discretization  approximates the true dynamic programming equations and how efficient the algorithm in solving the original problem. As mentioned before, the construction of a deterministic upper bound could be inefficient in the multivariate setting, hence we simply let the algorithm run 1000 iterations.

\paragraph{Formulation 3: basket option with $\lambda = 0$ (the risk neutral case).}
Figure~\ref{fig8} shows that  the lower bounds generated by the SDDP algorithm stabilize after about 500 iterations.
It appears that  the SDDP algorithm solves the discretized problem quite accurately, and we evaluate its performance for the original problem by generating scenarios from the original distribution.
As shown in Figures~\ref{fig9} and~\ref{fig10}, the distributions of policies, namely interest discounted profits and stopping times, vary across trials.

To further understand the relation between the quality of the discretization  and the number of discretized points, we run 30 additional trials with $N = 300$ for each time period, and the results are summarized in Figures~\ref{fig11} and~\ref{fig12}. In short, the distributions of profits now look similar, though the distributions of stopping times still have a moderate variation.

\begin{figure}[H]
	\centering
	\includegraphics[scale=0.50]{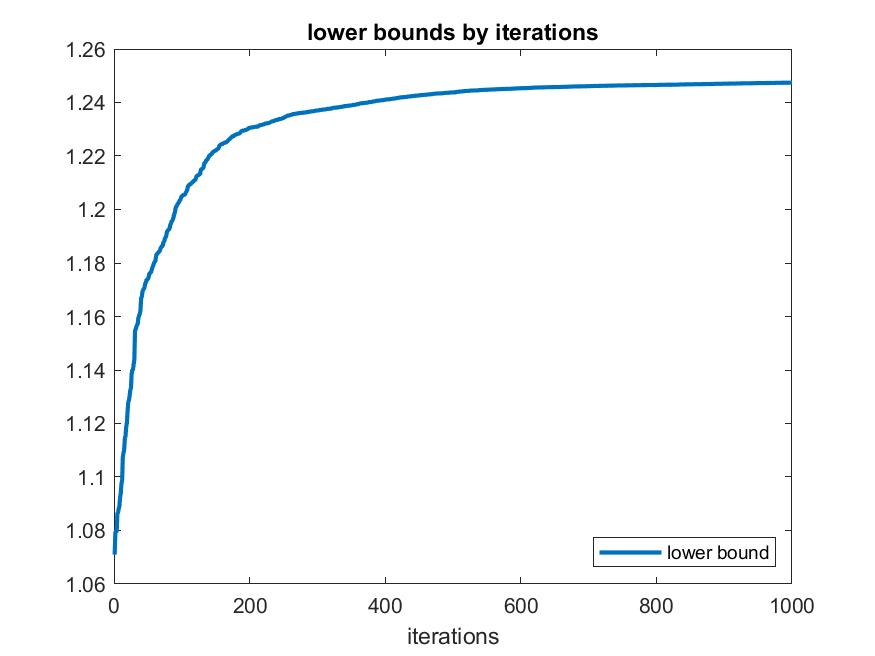}
	\caption{Lower bounds of the optimal objective values (of formulation 3) generated by the SDDP algorithm in 1000 iterations.}
	\label{fig8}
\end{figure}

\begin{figure}[H]
	\centering
	\includegraphics[scale=0.55]{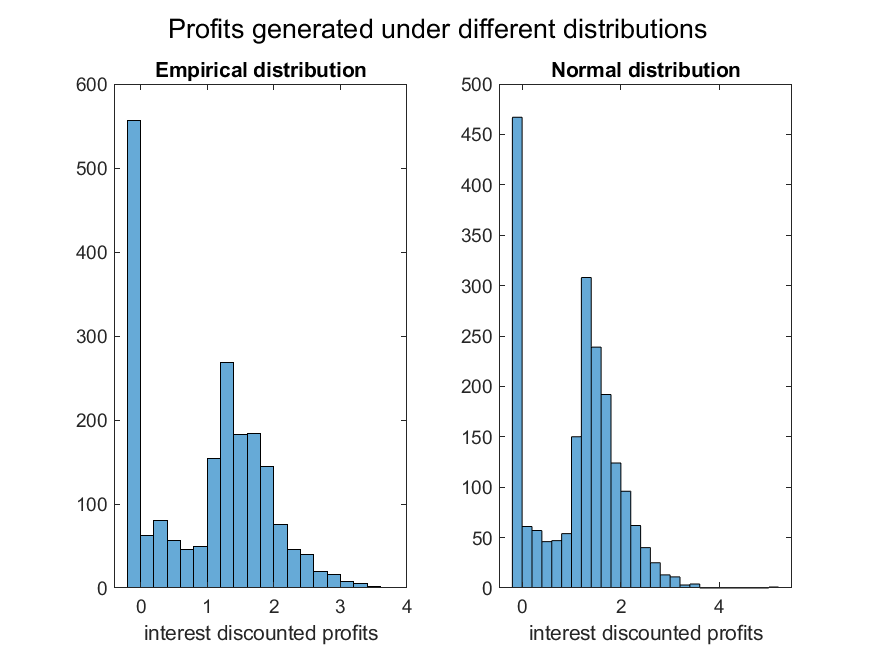}
	\includegraphics[scale=0.55]{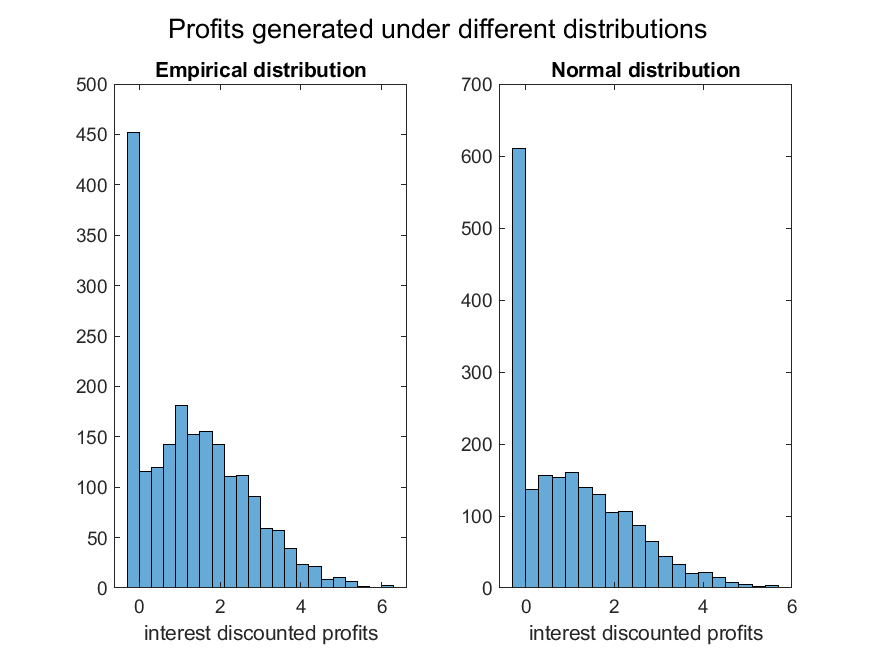}
	\caption{Typical distributions of interest discounted profit (of formulation 3 with $N = 100$) where the scenarios are sampled from both the empirical and the original normal distribution.}
	\label{fig9}
\end{figure}

\begin{figure}[H]
	\centering
	\includegraphics[scale=0.55]{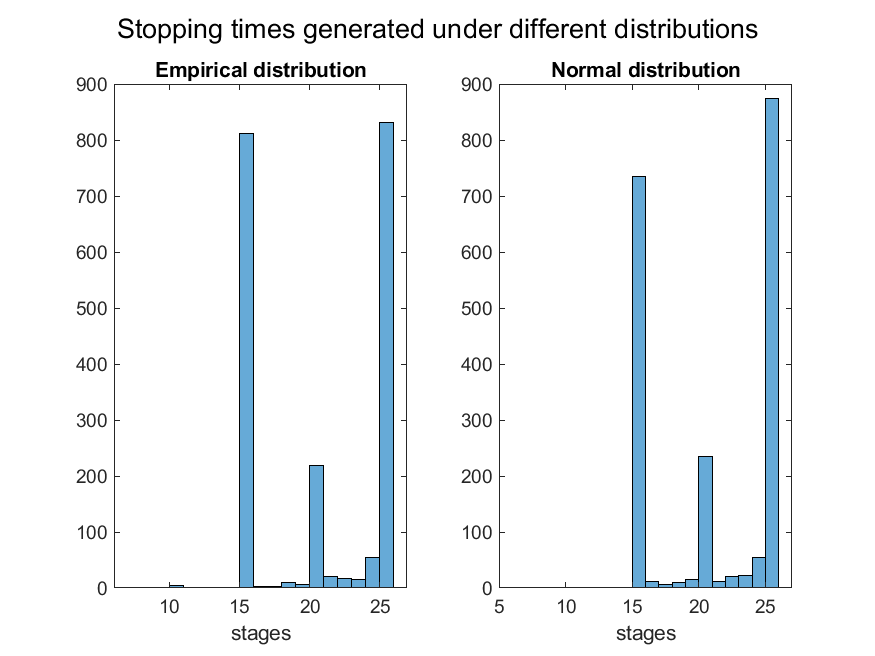}
	\includegraphics[scale=0.55]{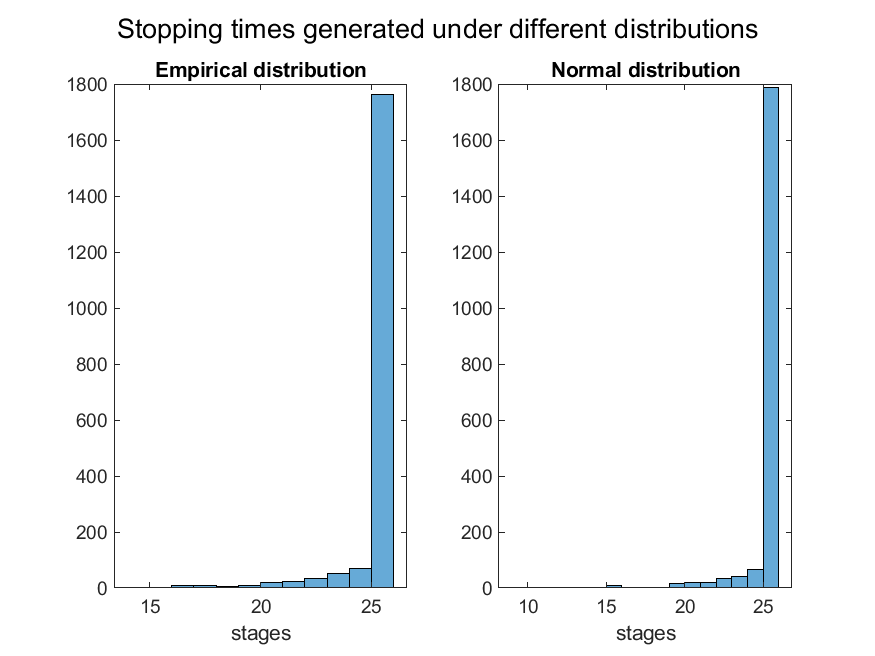}
	\caption{Typical distributions of stopping time (of formulation 3 with $N = 100$) where the scenarios are sampled from both the empirical and the original normal distribution.}
	\label{fig10}
\end{figure}

\begin{figure}[H]
	\centering
	\includegraphics[scale=0.55]{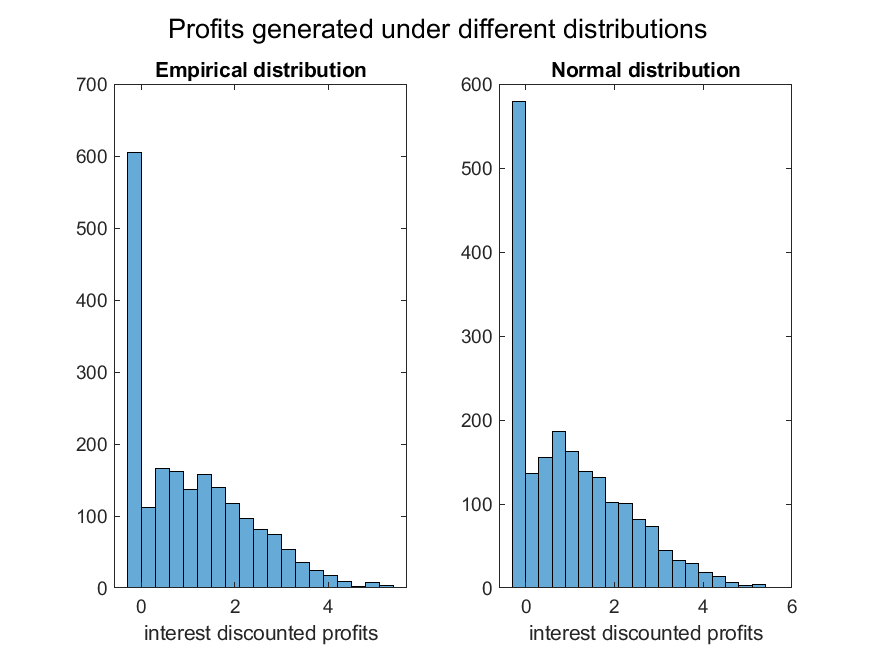}
	\includegraphics[scale=0.55]{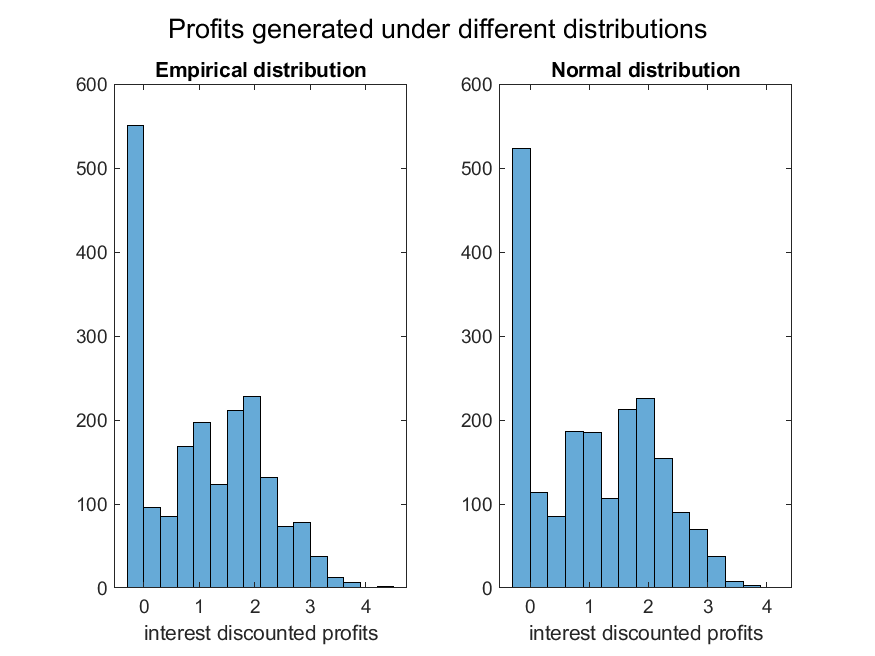}
	\caption{Typical distributions of interest discounted profit (of formulation 3 with $N = 300$) where the scenarios are sampled from both the empirical and the original normal distribution.}
	\label{fig11}
\end{figure}

\begin{figure}[H]
	\centering
	\includegraphics[scale=0.55]{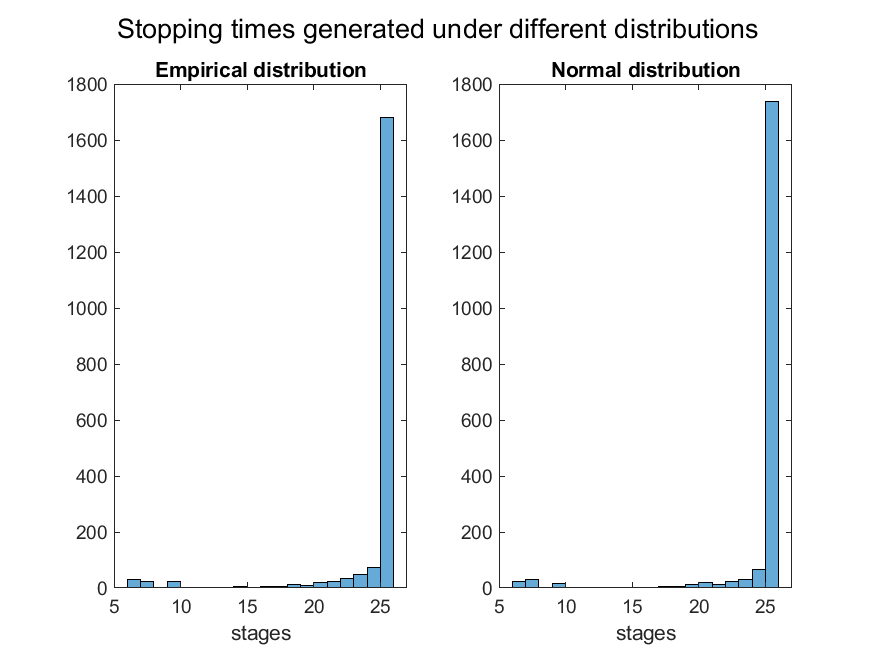}
	\includegraphics[scale=0.55]{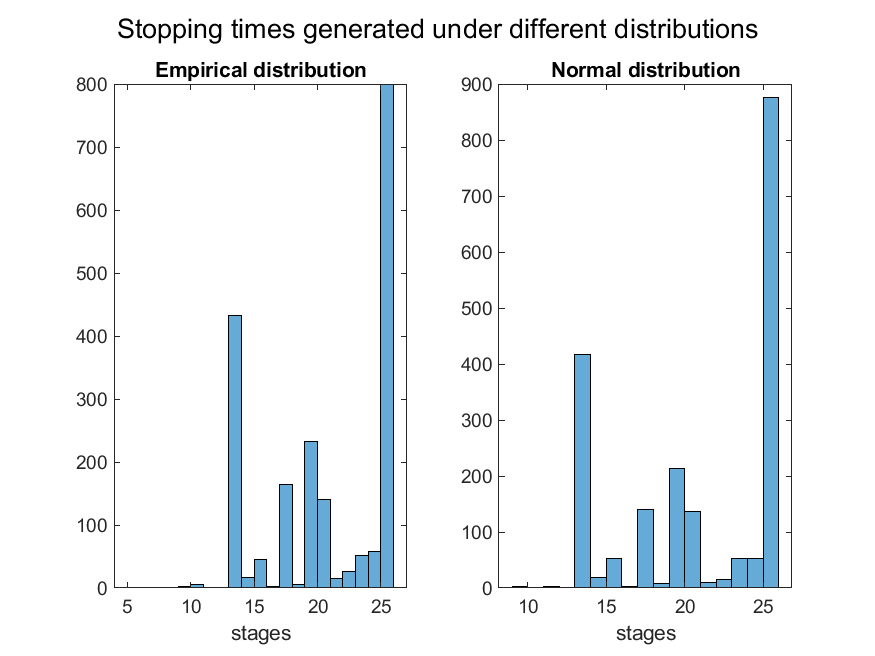}
	\caption{Typical distributions of stopping time (of formulation 3 with $N = 300$) where the scenarios are sampled from both the empirical and the original normal distribution.}
	\label{fig12}
\end{figure}

\paragraph{Formulation 4: basket option with $\lambda = 0.2$ and $\alpha = 0.05$.}  Like the  other formulations, the lower bounds generated by the SDDP algorithm in this case stabilize after about 500 iterations, hence we omit the plot. Similar to formulation 2, all distributions of stopping times and profits have the same shape (see Figure~\ref{fig13}), and the stopping time concentrates on late stages. This indicates that the discretization gives  is a good approximation of the original problem  even when $N = 100$ for each time period.

\begin{figure}[H]
	\centering
	\includegraphics[scale=0.55]{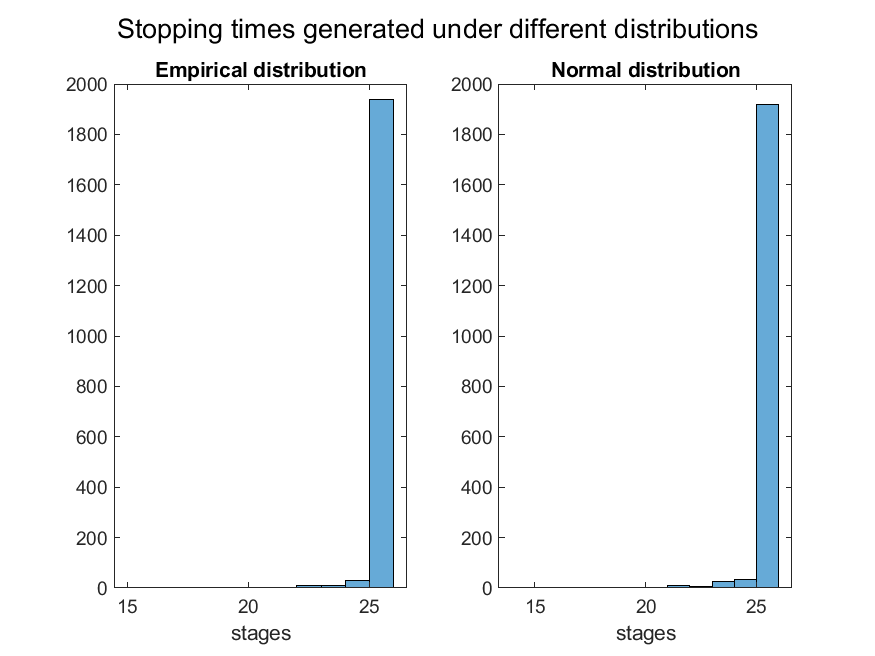}
	\includegraphics[scale=0.55]{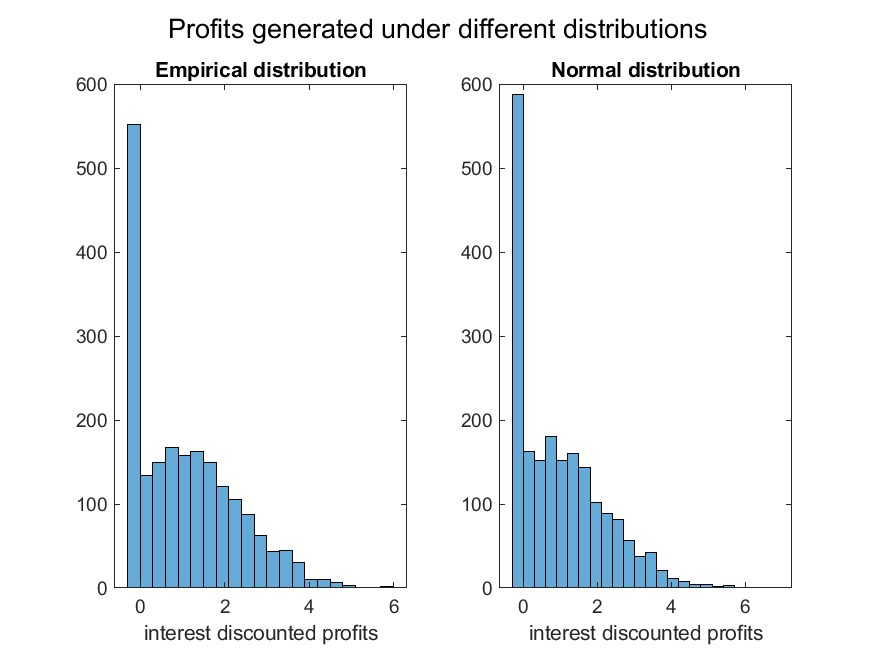}
	\caption{Typical distributions of stopping time and interest discounted profit (of formulation 4) where the scenarios are sampled from both the empirical and the original normal distribution.}
	\label{fig13}
\end{figure}

We next consider the American basket option pricing problem \eqref{bop} with concave risk measure, with  $T = 25$ and $\varrho(Z) = (1 - \lambda) \mathbb{E}(Z) - \lambda \avr_\alpha(-Z)$ for some $\lambda\in [0, 1]$ and $\alpha\in (0, 1)$. This is a risk averse formulation since the concave risk measure shifts more weights to scenarios that reduce the profits. As mentioned in Remark~\ref{rem-max}, the convexity is not preserved when a concave risk measure is used.

Without convexity, the SDDP algorithm cannot be applied directly. We circumvent this by approximating the risk averse formulation by solving a sequence of risk neutral problems. The basic idea is to treat every stage of the risk averse formulation as a saddle point problem like \eqref{rem-stab}. Starting with the discretized distribution that assigns equal probability to each scenario at every stage, we iteratively solve the risk neutral problem and construct a new discretized distribution by shifting more weights to the ``bad" scenarios for the current risk neutral problem. Specifically, in each iteration, we first let the SDDP algorithm solve the discretized  risk neutral problem. Then we sample scenarios from the discretized distribution; each scenario corresponds to a sequence of stock prices $\{S_0^j\}_{j\in J}, \{S_1^j\}_{j\in J}, \ldots, \{S_T^j\}_{j\in J}$ according to the equation \eqref{udr}. For each $t = 1, \ldots, T$, we
consider $V_t(\{S^j_{t - 1}\}_{j\in J}, \hat{\epsilon}_t)$ constructed by the discretized problem, where $\hat{\epsilon}_t$ are the possible realizations of $\epsilon_t$ in the discretized distribution. Consequently we   arrange these values  in the  ascending order.
For each $\hat{\epsilon}_t$, we then count the frequency (i.e., the number of scenarios) that $\underline{V}_t(S_{t - 1}, \hat{\epsilon}_t)$ has one of the $\lceil \alpha N \rceil$-lowest values. The new distribution is constructed based on the frequency such that the realization $\hat{\epsilon}_t$ is assigned the probability
\begin{itemize}
	\item $\frac{1 - \lambda}{N} + \frac{\lambda}{\alpha N}$, if its frequency is among the top $\lfloor \alpha N \rfloor$ frequencies for stage $t$.
	
	\item $\frac{1 - \lambda}{N} + \frac{\lambda (\alpha N - \lfloor\alpha N\rfloor)}{\alpha N}$, if its frequency is the $(\lfloor \alpha N \rfloor + 1)$-highest frequency for stage $t$.
	
	\item $\frac{1 - \lambda}{N}$, otherwise.
\end{itemize}
Note that those probability weights are exactly the ones assigned by $\varrho$ to different outcomes of the random variable $Z$ when $Z$ has only finitely many outcomes.

The sequence of risk neutral problems is not guaranteed to converge to the original problem in general. Nonetheless, this approach works well on the basket option pricing problem we tested. First of all, the sequence of constructed distributions converges. Besides, the policies generated by the discretization,  obtained in the last iteration,  exhibit  risk aversion. Indeed, compared to Figures~\ref{fig10} and~\ref{fig12}, the stopping time shown in Figure~\ref{fig14} has more weights in the early stages, and the distribution of the interest discounted profit (see Figure~\ref{fig14}) has a shorter range, because the higher potential returns are turned down in exchange for less risk.

\begin{figure}[H]
	\centering
	\includegraphics[scale=0.55]{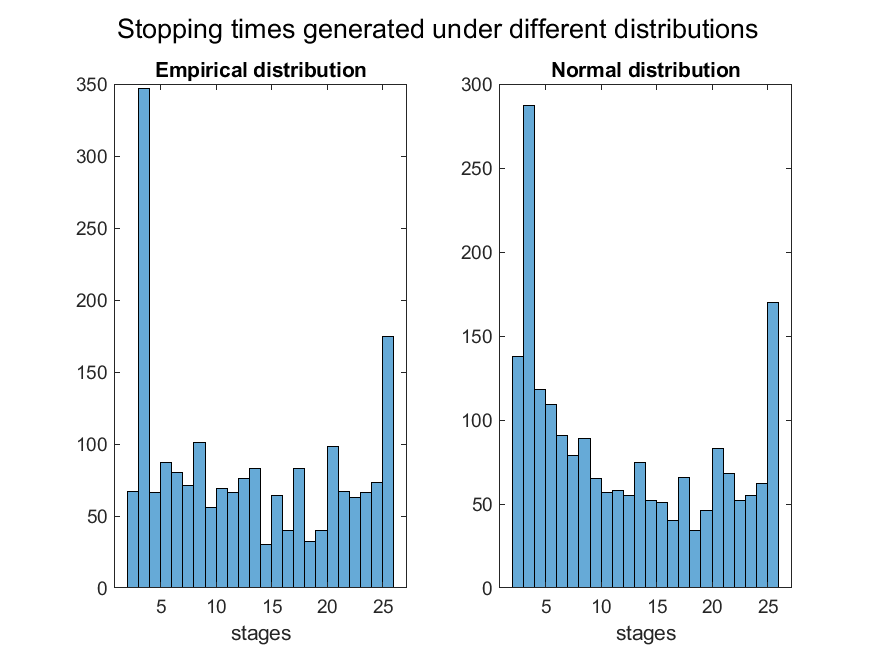}
	\includegraphics[scale=0.55]{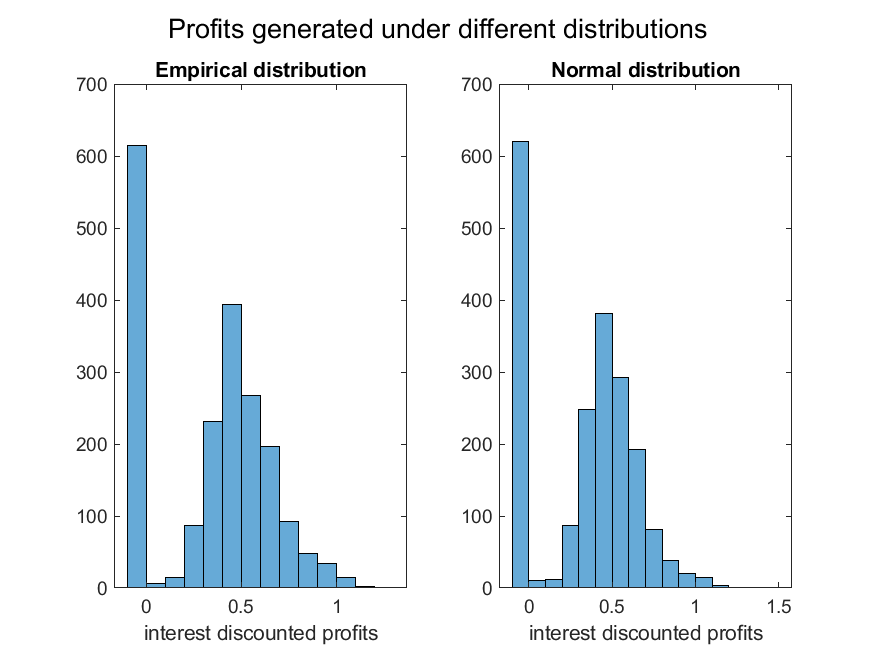}
	\caption{Typical distributions of stopping time and interest discounted profit (of formulation 4) where the scenarios are sampled from both the empirical and the original normal distribution.}
	\label{fig14}
\end{figure}

\bibliographystyle{abbrvnat}
\bibliography{../Literatur/LiteraturAlois,LiteratureAlexander}
\end{document}